\theoremstyle{plain}
\newtheorem{theorem}{Theorem}[section]
\newtheorem{corollary}[theorem]{Corollary}
\newtheorem{lemma}[theorem]{Lemma}
\newtheorem{proposition}[theorem]{Proposition}
\theoremstyle{definition}
\newtheorem{definition}[theorem]{Definition}
\newtheorem{example}[theorem]{Example}
\theoremstyle{remark}
\newtheorem{remark}[theorem]{Remark}
\newcommand{\R}{\mathbb{R}}
\newcommand{\Z}{\mathbb{Z}}
\newcommand{\Tor}{\operatorname{Tor}}
\newcommand{\simp}{\operatorname{simp}}
\newcommand{\Tot}{\operatorname{Tot^\oplus}}
\newcommand{\id}{\operatorname{id}}
\newcommand{\Int}{\operatorname{Int}}
\begin{document}

\title[Magnitude homology]{Magnitude homology of metric spaces and order complexes}

\date{\today}

\begin{abstract}
Hepworth, Willerton, 
Leinster and Shulman introduced the magnitude homology groups 
for enriched categories, in particular, for metric spaces. 
The purpose of this paper is to describe the 
magnitude homology group of a metric space in terms of order 
complexes of posets. 

In a metric space, an interval (the set of points between two chosen points) 
has a natural poset structure, which is called the interval poset. 
Under additional assumptions on sizes of $4$-cuts, we show that 
the magnitude chain complex can be 
constructed using tensor products, direct sums and degree shifts 
from order complexes of interval posets. 

We give several applications. First, we show the vanishing of higher magnitude 
homology groups for convex subsets of the Euclidean space. Second, magnitude 
homology groups carry the information about the diameter of a hole. 
Third, we construct a finite graph whose 
$3$rd magnitude homology group has torsion. 
\end{abstract}

\author{Ryuki Kaneta}
\address{Ryuki Kaneta, Department of Mathematics, Hokkaido University, Kita 10, Nishi 8, Kita-Ku, Sapporo 060-0810, Japan.}
\email{s173008@math.sci.hokudai.ac.jp}
\author{Masahiko Yoshinaga}
\address{Masahiko Yoshinaga, Department of Mathematics, Faculty of Science, Hokkaido University, Kita 10, Nishi 8, Kita-Ku, Sapporo 060-0810, Japan.}
\email{yoshinaga@math.sci.hokudai.ac.jp}


\keywords{}

\date{\today}
\maketitle

\tableofcontents

\section{Introduction}
\label{sec:intro}

The magnitude for an enriched category was introduced by Leinster 
\cite{lei-eul, lei-mag} as a generalization of the Euler characteristic 
of the nerve of a category. The same notion for finite metric spaces 
was also studied in theoretical ecology as a measure of biological diversity 
\cite{so-po}. 

The magnitude homology groups,  a ``categorification'' of magnitudes, 
for finite graphs were defined by 
Hepworth and Willerton \cite{hep-wil} and for general setting 
by Leinster and Shulman \cite{Lei-Shu} recently. 
The magnitude homology groups for a finite metric space recover 
its magnitude as a divergent alternating sum which coincide with 
the divergent series studied in \cite{ber-lei}. 

The magnitude homology group $H_n^{\Sigma, \ell}(X)$ of a 
metric space $X$ is an abelian group bigraded by 
a non-negative integer (degree) $n$ and 
a non-negative real number (grading) $\ell$. 
Leinster and Shulman proved that magnitude homology groups 
detect several geometric properties of the space $X$. Among others, 
$X$ is Menger convex (see \S \ref{subsec:notion} for the definition) 
if and only if $H_1^{\Sigma, \ell}(X)=0$ for all $\ell>0$ 
\cite[Corollary 7.6]{Lei-Shu}. 
In particular, if 
$X$ is a closed subset of the Euclidean space $\R^N$, 
$X$ is convex in the usual sense if and only if 
$H_1^{\Sigma, \ell}(X)=0$ for all $\ell>0$. 
They also posed number of interesting open problems on 
magnitude homology of metric spaces \cite[\S 8]{Lei-Shu}. 

The purpose of this paper is to develop methods of computing 
magnitude homology groups of metric spaces (under certain 
assumptions on $X$ and $\ell$). 
We reduce the magnitude chain complex to the order 
complexes of interval posets. 
Our reduction proceeds in the following three steps. 
\begin{itemize}
\item[1.] 
The magnitude chain complex is decomposed into a direct sum of 
\emph{framed magnitude chain complexes}. (\S \ref{sec:frame})
\item[2.] 
The framed magnitude chain complex is decomposed into tensor 
product of those of intervals. (\S \ref{sec:tensor})
\item[3.] 
The framed magnitude chain complex of an 
interval is isomorphic (up to degree shift) 
to the order complex of the \emph{interval poset}. 
(\S \ref{sec:ordercpx})
\end{itemize}
Then we give several applications based on the above description. 

First, in \S \ref{sec:geodetic}, for a geodetic metric space, 
we show that the magnitude homology $H_n^{\Sigma, \ell}(X)$ is 
freely generated by certain frames (\emph{thin frames}, see 
Definition \ref{def:thin}), which generalizes 
several results by Leinster and Shulman 
\cite[Corollary 7.6, Theorem 7.25]{Lei-Shu}. 
As a result, convex subsets and open subsets $X$ of the Euclidean space 
$\R^N$ 
(more generally, Menger convex geodetic metric space with no $4$-cuts) 
has $H_n^{\Sigma, \ell}(X)=0$ for any $n>0$ and $\ell>0$. 

Second, in \S \ref{sec:diameter}, we show that the quantity 
\begin{equation}
\sup
\left\{
\left.
\frac{\ell}{n}
\right| 
n>0, \ell>0, 
H_n^{\Sigma, \ell}(X)\neq 0
\right\}
\end{equation}
carries certain geometric information about $X$. 
For example, if $X=\R^N\smallsetminus U$ 
is the complement of an open convex subset $U\subset\R^N$, then 
the above quantity is exactly equal to the diameter of $U$ 
(Theorem \ref{thm:hole}). 

Third, in \S \ref{sec:embed}, we show that the homology groups 
of the order complex of a ranked poset $P$ (of rank $r\geq 0$) 
is embedded into the magnitude homology group of the 
Hasse diagram of $\widehat{P}=P\sqcup\{\widehat{0}, \widehat{1}\}$ 
with grading $\ell=r+2$. In particular, if $P$ is the face 
poset of a triangulation of $\mathbb{RP}^2$, then the magnitude 
homology $H_3^{\Sigma, 4}(\widehat{P})$ has a $2$-torsion. 
This answers to the question raised in \cite[\S 8 (7)]{Lei-Shu} and 
\cite[\S 1.2.2]{hep-wil}. 

After the completion of the present paper, we learned that 
Beno\^it Jubin has independently obtained similar descriptions for 
the magnitude homology groups \cite{jubin}.

\section{Definition of the magnitude homology}
\label{sec:main}

\subsection{Notions on metric spaces}
\label{subsec:notion}

Let $(X, d)$ be a metric space. We begin by fixing some terminology 
which are necessary to define the notion of magnitude homology 
(\cite{Lei-Shu}). 
We say that 
the point $y\in X$ is between $x$ and $z\in X$ if $d(x, y)+d(y, z)=d(x, z)$, 
which is denoted by $x\preceq y \preceq z$. Moreover, if $x\neq y\neq z$, 
we denote $x\prec y \prec z$. 

The metric space $(X, d)$ is said to be \emph{Menger convex} if for 
any $x\neq z$ there exists a point $y\in X$ with $x\prec y\prec z$. 

We say that a tuple $\bm{x}=(x_0, x_1, \dots, x_n)\in X^{n+1}$ is a proper chain 
if $x_{i-1}\neq x_i$ for all $i=1, \dots, n$. We call $n$ the degree of 
$\bm{x}$ and define its length by 
\[
|\bm{x}|=
d(x_0, x_1)+d(x_1, x_2)+\dots+d(x_{n-1}, x_n). 
\]
We denote by $P_n(X)$ (respectively $P_n^\ell(X)$) 
the set of all proper chains of degree $n$ 
(respectively, with length $\ell$). 

The following will be used frequently. Since it is straightforward, 
we omit the proof. 
\begin{proposition}
\label{prop:easy}
Let $\bm{x}=(x_0, x_1, \dots, x_n)\in P_n(X)$. If 
$|\bm{x}|=d(x_0, x_n)$, 
then 
\begin{equation}
\label{eq:triang}
d(x_i, x_j)=d(x_i, x_{i+1})+\dots+d(x_{j-1}, x_j), 
\end{equation}
for any $0\leq i<j\leq n$. Furthermore, for any subsequence 
$i_0<i_1<\cdots <i_m$ of $\{0, 1, 2, \dots, n\}$, we have 
\begin{equation}
\label{eq:triang02}
d(x_{i_0}, x_{i_m})=d(x_{i_0}, x_{i_1})+d(x_{i_1}, x_{i_2})+\dots+
d(x_{i_{m-1}}, x_{i_m}). 
\end{equation}
\end{proposition}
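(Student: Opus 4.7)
The plan is to show that the hypothesis $|\bm{x}|=d(x_0,x_n)$ forces every nested application of the triangle inequality along the chain to be an equality, and then to read off both statements from this rigidity.

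For the first equation, I would fix $0\leq i<j\leq n$ and sandwich $d(x_0,x_n)$ between two expressions. On one hand, two applications of the triangle inequality give
\[
d(x_0,x_n)\leq d(x_0,x_i)+d(x_i,x_j)+d(x_j,x_n).
\]
On the other hand, applying the triangle inequality along each of the three sub-chains $(x_0,\dots,x_i)$, $(x_i,\dots,x_j)$, $(x_j,\dots,x_n)$ yields
\[
d(x_0,x_i)+d(x_i,x_j)+d(x_j,x_n)\leq \sum_{k=1}^{n} d(x_{k-1},x_k)=|\bm{x}|.
\]
By hypothesis the left and right ends are equal, so every inequality in this chain is an equality. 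Reading off the middle inequality (applied to the sub-chain from $x_i$ to $x_j$) gives exactly the desired identity (\ref{eq:triang}).

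For the second equation, I would combine (\ref{eq:triang}) with one more use of the triangle inequality: by the triangle inequality along the subsequence,
\[
d(x_{i_0},x_{i_m})\leq \sum_{l=1}^{m} d(x_{i_{l-1}},x_{i_l}),
\]
while each summand $d(x_{i_{l-1}},x_{i_l})$ equals $\sum_{k=i_{l-1}+1}^{i_l} d(x_{k-1},x_k)$ by (\ref{eq:triang}). Telescoping these sums gives $\sum_{k=i_0+1}^{i_m}d(x_{k-1},x_k)$, which by (\ref{eq:triang}) again is $d(x_{i_0},x_{i_m})$. Thus the inequality is in fact an equality, proving (\ref{eq:triang02}).

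There is no real obstacle here; the statement is essentially a rigidity property of geodesic-like chains, and the only thing to be careful about is organizing the two layers of triangle inequalities so that the hypothesis collapses them simultaneously. The second part is a formal consequence of the first, so the only step requiring any thought is the sandwich argument above.
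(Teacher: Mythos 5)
Your proof is correct, and it is essentially the standard ``straightforward'' argument the paper has in mind (the paper omits the proof): sandwiching $d(x_0,x_n)$ between the triple split and the full telescoping sum forces every triangle inequality to be an equality, which gives (\ref{eq:triang}), and (\ref{eq:triang02}) follows by telescoping. Note only that your final step is slightly redundant: once each $d(x_{i_{l-1}},x_{i_l})$ is rewritten via (\ref{eq:triang}), the sum telescopes directly to $d(x_{i_0},x_{i_m})$, so (\ref{eq:triang02}) follows without invoking the triangle inequality again.
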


\begin{definition}
\label{def:4cut}
$(x_0, x_1, x_2, x_3)\in P_3(X)$ is a \emph{$4$-cut} of $X$ if 
$x_0\prec x_1\prec x_2$, 
$x_1\prec x_2\prec x_3$ and 
$d(x_0, x_3)<|(x_0, x_1, x_2, x_3)|$. 

We denote by $m_X\geq 0$ the infimum of lengths of $4$-cuts, namely, 
\[
m_X:=
\inf\{|(x_0, x_1, x_2, x_3)|; 
(x_0, x_1, x_2, x_3) \mbox{ is a $4$-cut of $X$}\}. 
\]
In case $X$ does not have $4$-cuts, we suppose $m_X=+\infty$. 
\end{definition}

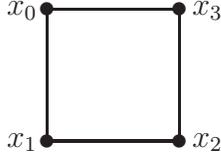
\begin{figure}[htbp]
\begin{picture}(100,50)(0,0)
\thicklines

\multiput(0,0)(50,0){2}{\circle*{5}}
\multiput(0,50)(50,0){2}{\circle*{5}}
\multiput(0,0)(50,0){2}{\line(0,1){50}}
\multiput(0,0)(0,50){2}{\line(1,0){50}}

\put(-15,48){$x_0$}
\put(-15,-2){$x_1$}
\put(55,-2){$x_2$}
\put(55,48){$x_3$}

\end{picture}
\caption{A $4$-cut $(x_0, x_1, x_2, x_3)$}\label{fig:4cut}
\end{figure}

\begin{example}
\begin{itemize}
\item[(1)] 
Let $(X, d)$ be a metric space and $Y\subset X$ be a subspace 
with restricted metric $d|_Y$. Then $m_X\leq m_Y$. 
\item[(2)] 
The Euclidean space $\R^n$ does not have $4$-cuts. 
Hence for any metric subspace 
$X\subset\R^n$, $m_X=+\infty$. 
\item[(3)] 
Let $X=S^1$ be the circle of radius $r$ with the geodesic metric. 
Then $m_X=\pi r$ (which is the distance between antipodal points). 
\end{itemize}
\end{example}

\begin{lemma}
\label{lem:global}
Let $n>1$ and $\bm{x}=(x_0, x_1, \dots, x_n)\in P_n(X)$. 
Suppose that $x_{i-1}\prec x_i\prec x_{i+1}$ for $i=1, \dots, n-1$ and 
$|\bm{x}|<m_X$. Then 
\begin{equation}
\label{eq:global}
|\bm{x}|=d(x_0, x_n). 
\end{equation}
\end{lemma}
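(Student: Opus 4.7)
The plan is to proceed by induction on $n$. The base case $n=2$ is immediate: the hypothesis reduces to $x_0 \prec x_1 \prec x_2$, which by definition gives $d(x_0,x_1)+d(x_1,x_2)=d(x_0,x_2)$, i.e.\ $|\bm{x}|=d(x_0,x_n)$.

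For the inductive step with $n \geq 3$, I would first apply the induction hypothesis to the two sub-chains $(x_0,\dots,x_{n-1})$ and $(x_1,\dots,x_n)$. Both are proper chains of degree $n-1$, both inherit the consecutive-betweenness condition, and both have length at most $|\bm{x}| < m_X$. So by induction,
\[
d(x_0,x_{n-1}) = |(x_0,\dots,x_{n-1})|, \qquad d(x_1,x_n)=|(x_1,\dots,x_n)|.
\]
Combined with Proposition \ref{prop:easy}, this tells me that any subchain of either of these is also ``straight.'' In particular, $d(x_0,x_{n-1})=d(x_0,x_1)+d(x_1,x_{n-1})$ and $d(x_1,x_n)=d(x_1,x_{n-1})+d(x_{n-1},x_n)$, which is to say $x_0\prec x_1 \prec x_{n-1}$ and $x_1 \prec x_{n-1}\prec x_n$ (using that these points are pairwise distinct, since summing positive distances of a proper chain yields strictly positive distances).

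Now consider the $4$-tuple $\bm{y}:=(x_0,x_1,x_{n-1},x_n)\in P_3(X)$. By the previous paragraph, $\bm{y}$ satisfies the two betweenness conditions in Definition \ref{def:4cut}. Moreover,
\[
|\bm{y}|=d(x_0,x_1)+d(x_1,x_{n-1})+d(x_{n-1},x_n)=|\bm{x}|<m_X,
\]
where the middle equality uses the straightness of the two sub-chains established above. If $\bm{y}$ were a $4$-cut, its length would be at least $m_X$ by definition of the infimum, contradicting $|\bm{y}|<m_X$. Hence $\bm{y}$ is not a $4$-cut, and therefore $d(x_0,x_n)\geq |\bm{y}|=|\bm{x}|$; the reverse inequality is the triangle inequality, so $d(x_0,x_n)=|\bm{x}|$.

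The main subtlety I anticipate is purely bookkeeping: verifying that $\bm{y}$ is actually a proper chain (i.e.\ checking that $x_0, x_1, x_{n-1}, x_n$ are consecutively distinct, which follows from the straightness of the sub-chains together with $n\geq 3$), and making sure the induction is correctly set up when $n=3$, in which case $\bm{y}=\bm{x}$ itself and the argument collapses to ``$\bm{x}$ would be its own disqualifying $4$-cut.'' Otherwise the argument is a clean inductive reduction of global straightness of $\bm{x}$ to the non-existence of short $4$-cuts.
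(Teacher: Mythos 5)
Your proof is correct and follows essentially the same route as the paper's: induction on $n$, with the inductive step reduced to the observation that a suitable $4$-tuple of length $|\bm{x}|<m_X$ cannot be a $4$-cut. The only cosmetic difference is that you contract the middle of the chain to form $(x_0,x_1,x_{n-1},x_n)$ using both sub-chains, whereas the paper uses $(x_0,x_{n-2},x_{n-1},x_n)$ and only the left sub-chain; both work for the same reason.
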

\begin{proof}
We can prove by induction on $n$. The case $n=2$ is trivial. 
If $n=3$, (\ref{eq:global}) holds because there does not 
exist $4$-cut with length less than $m_X$. 

Let $n>3$. By inductive assumption, we have 
$d(x_0, x_{n-1})=\sum_{i=1}^{n-1} d(x_{i-1}, x_i)$. 
Since $(x_0, x_{n-2}, x_{n-1}, x_n)$ is not a $4$-cut, 
(\ref{eq:global}) holds. 
\end{proof}

\subsection{Interval poset}
\label{subsec:interval}

\begin{definition}
Let $X$ be a metric space and $a,b \in X$. 
Denote the set of points between $a$ and $b$ by 
\begin{equation*}
I_X(a,b):=\{x \in X \mid a\prec x \prec b \}. 
\end{equation*}
The set $I_X(a, b)$ carries a natural poset structure defined by 
\begin{equation*}
x\leq y\Longleftrightarrow a \prec x \preceq y
\end{equation*}
for $x, y\in I_X(a, b)$. 
(Note that the above definition is equivalent to $x\preceq y\prec b$.) 
We call $I_X(a, b)$ the \emph{interval poset} between $a$ and $b$. 
\end{definition}
As \cite[Definition 7.20]{Lei-Shu}, $X$ is said to be \emph{geodetic} 
if $I_X(a, b)$ is totally ordered (or empty) for any $a, b\in X$.

\subsection{Magnitude homology in grading $\ell$}

Next we recall the definition of the normalized chain complex 
$B_{\bullet}^{\ell}(X)$ of $X$ in grading $\ell$. 

\begin{definition}(\cite{Lei-Shu}, Lemma 7.1.)
\label{def:magnitudechaincpx}
Let $\ell\in\R_{\geq 0}$. 
The chain complex $(B_{\bullet}^{\ell}(X), \partial_{\bullet})$ is 
defined as follows. 
\begin{equation*}
B_{n}^{\ell}(X)=
\bigoplus_{\bm{x}\in P_n^\ell(X)} 
\Z \cdot\langle \bm{x} \rangle ,
\end{equation*}
The boundary map $\partial_{n}$ is defined by 
$\partial_{n}=\sum_{i=0}^n(-1)^i\partial_{n,i}$, 
where the map $\partial_{n,i}$ discards $x_i$ if 
$x_{i-1}\prec x_i \prec x_{i+1}$. 
More precisely, 
\begin{equation*}
\partial_{n,i} (\langle x_0,\cdots ,x_n \rangle)=
\begin{cases}
\langle x_0,\cdots, \widehat{x_{i}}, \cdots ,x_n \rangle, 
 & x_{i-1}\prec x_i \prec x_{i+1}\\
0, & \mbox{otherwise}, 
\end{cases}
\end{equation*}
where $\widehat{x_i}$ indicates that $x_i$ has been omitted. 
It is also denoted by $\bm{x}\smallsetminus\{x_i\}$. 

The homology $H_n^{\Sigma, \ell}(X):=H_n(B_{\bullet}^{\ell}(X))$ 
of the chain complex $(B_{\bullet}^{\ell}(X),\partial_{\bullet})$ 
is called the magnitude homology group of $X$ 
in grading $\ell$ (\cite{Lei-Shu}). 
\end{definition}

\begin{remark}
From the definition, it is easily seen that $H_0^{\Sigma, 0}(X)=\Z^{\oplus X}$, 
$H_0^{\Sigma, \ell}(X)=0$ (for $\ell>0$), and  
$H_n^{\Sigma, 0}(X)=0$ (for $n>0$). In the sequel, we are mainly interested 
in $H_n^{\Sigma, \ell}(X)$ for $n\in\Z_{>0}$ and $\ell\in\R_{>0}$. 
\end{remark}

\section{Framed magnitude homology}

\label{sec:frame}

\subsection{Frames}

\begin{definition}
Consider $\bm{x}=(x_0,x_1,\cdots, x_{n}) \in P_n(X)$. 
If $1\leq i\leq n-1$ and 
$x_{i-1}\prec x_i\prec x_{i+1}$, 
we say that the $i$-th point $x_i$ 
is a \emph{smooth point} of $\bm{x}$. 
Otherwise, we say that $x_i$ is a 
\emph{singular point} of $\bm{x}$. 
\end{definition}
The following is straightforward. 

\begin{proposition}
Let $\bm{x}=(x_0, \dots, x_n)\in P_n(X)$. 
\begin{itemize}
\item[(1)] 
Let $i\in\{1, \dots, n-1\}$. Then the following are equivalent. 
\begin{itemize}
\item 
The $i$-th point $x_i$ is a smooth point of $\bm{x}$. 
\item 
$d(x_{i-1}, x_{i+1})=
d(x_{i-1}, x_{i})+d(x_{i}, x_{i+1})$. 
\item 
$|\bm{x}\smallsetminus\{x_i\}|=|\bm{x}|$. 
\end{itemize}
\item[(2)] 
Let $i\in\{0, 1, , \dots, n\}$. Then the following are equivalent. 
\begin{itemize}
\item 
The $i$-th point $x_i$ is a singular point of $\bm{x}$. 
\item 
$i\in\{0, n\}$ or 
$d(x_{i-1}, x_{i+1})<d(x_{i-1}, x_{i})+d(x_{i}, x_{i+1})$. 
\item 
$|\bm{x}\smallsetminus\{x_i\}|<|\bm{x}|$. 
\end{itemize}
\end{itemize}
\end{proposition}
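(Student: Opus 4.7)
The plan is to unwind the definitions and apply the triangle inequality; no nontrivial ingredients should appear. I would start with part (1), which carries the substance, and then derive part (2) as an essentially formal consequence.

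For the equivalence of smoothness of $x_i$ (with $1\leq i\leq n-1$) with the distance identity $d(x_{i-1}, x_{i+1})=d(x_{i-1}, x_i)+d(x_i, x_{i+1})$, I would unpack smoothness as $x_{i-1}\preceq x_i\preceq x_{i+1}$ together with the strict inequalities $x_{i-1}\neq x_i\neq x_{i+1}$. The ``between'' relation $x_{i-1}\preceq x_i\preceq x_{i+1}$ is literally the distance identity. Distinctness of consecutive terms is automatic because $\bm{x}$ is a proper chain, while the remaining distinctness $x_{i-1}\neq x_{i+1}$ needed for $\prec$ is forced by the distance identity, since $d(x_{i-1}, x_i)$ and $d(x_i, x_{i+1})$ are both strictly positive and their sum equals $d(x_{i-1}, x_{i+1})$.

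For the equivalence with the length condition $|\bm{x}\smallsetminus\{x_i\}|=|\bm{x}|$, the key observation is that deleting $x_i$ affects only the two edges incident to it, so
\[
|\bm{x}|-|\bm{x}\smallsetminus\{x_i\}|=d(x_{i-1}, x_i)+d(x_i, x_{i+1})-d(x_{i-1}, x_{i+1}).
\]
This quantity is $\geq 0$ by the triangle inequality, and vanishes precisely when the distance identity holds. Combining this with the previous paragraph closes the loop of equivalences in part (1).

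Part (2) is then bookkeeping. For interior indices $i\in\{1,\dots,n-1\}$, singularity is the negation of smoothness, so the three conditions in (2) are the negations of the three in (1) and hence equivalent by part (1); note that the strict triangle inequality is well-defined exactly when $i\notin\{0,n\}$, which matches the ``or'' in the second condition. For the boundary cases $i\in\{0, n\}$, the point $x_i$ is singular by definition, the second condition holds tautologically, and the length drops by $d(x_0, x_1)>0$ or $d(x_{n-1}, x_n)>0$ since $\bm{x}$ is proper. I do not anticipate any real obstacle; the only point requiring care is making the case split between interior and boundary indices explicit so that the ``or $d(x_{i-1},x_{i+1})<d(x_{i-1},x_i)+d(x_i,x_{i+1})$'' clause is interpreted only when it makes sense.
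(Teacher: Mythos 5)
Your proof is correct; the paper itself omits the argument as ``straightforward,'' and your unwinding of the definitions (properness of $\bm{x}$ gives the needed inequalities $x_{i-1}\neq x_i\neq x_{i+1}$, the identity $|\bm{x}|-|\bm{x}\smallsetminus\{x_i\}|=d(x_{i-1},x_i)+d(x_i,x_{i+1})-d(x_{i-1},x_{i+1})\geq 0$ via the triangle inequality, and the boundary cases $i\in\{0,n\}$ handled by the drop $d(x_0,x_1)>0$ or $d(x_{n-1},x_n)>0$) is exactly the intended routine verification.
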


\begin{definition}
\label{def:frame}
Let $\bm{x}=(x_0,x_1,\cdots, x_{n})\in P_n(X)$. Suppose $x_{i_1}, x_{i_2}, 
\dots, x_{i_m}$ are the list of all singular points of $\bm{x}$. 
(Note that $i_1=0$ and $i_m=n$.) 
Define $\varphi(\bm{x})$ by the chain consisting of singular points of 
$\bm{x}$, namely, 
\begin{equation*}
\varphi(\bm{x}):=(x_{i_1},x_{i_2},\cdots ,x_{i_m} ). 
\end{equation*}
We call $\varphi(\bm{x})$ the \emph{frame} of $\bm{x}$. 
\end{definition}

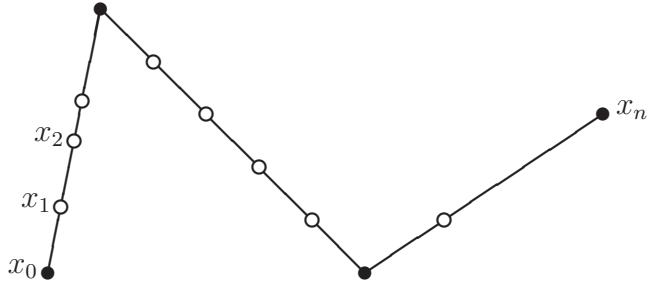
\begin{figure}[htbp]
\begin{picture}(200,100)(0,0)
\thicklines

\put(-15,0){$x_0$}

\multiput(0,0)(20,100){2}{\circle*{5}}
\put(0,0){\line(1,5){20}}

\put(-10,25){$x_1$}

{\color{white} 
\put(5,25){\circle*{5}}
}
\put(5,25){\circle{5}}

\put(-5,50){$x_2$}

{\color{white} 
\put(10,50){\circle*{5}}
}
\put(10,50){\circle{5}}

{\color{white} 
\put(13,65){\circle*{5}}
}
\put(13,65){\circle{5}}

\put(20,100){\line(1,-1){100}}

{\color{white} 
\put(40,80){\circle*{5}}
}
\put(40,80){\circle{5}}

{\color{white} 
\put(60,60){\circle*{5}}
}
\put(60,60){\circle{5}}

{\color{white} 
\put(80,40){\circle*{5}}
}
\put(80,40){\circle{5}}

{\color{white} 
\put(100,20){\circle*{5}}
}
\put(100,20){\circle{5}}

\put(120,0){\circle*{5}}

\put(120,0){\line(3,2){90}}

{\color{white} 
\put(150,20){\circle*{5}}
}
\put(150,20){\circle{5}}

\put(210,60){\circle*{5}}

\put(215,60){$x_n$}

\end{picture}
\caption{Singular points (black) and smooth points (white)}\label{fig:singular}
\end{figure}

\subsection{Geodesically simple chains}

\label{sec:geodesic}

\begin{definition}
\label{def:geodsimple}
Let $\bm{x}=(x_0, \dots x_n)\in P_n(X)$ with $\varphi(\bm{x})=
(x_{i_1},x_{i_2},\cdots ,x_{i_m} )$. We say that $\bm{x}$ is a 
\emph{geodesically simple chain} if 
\[
|(x_{i_\alpha}, x_{i_{\alpha+1}})|=
|(x_{i_\alpha}, x_{i_\alpha+1}, \dots, 
x_{i_{\alpha+1}-1}, x_{i_{\alpha+1}})|
\]
for all $\alpha=1, \dots, m-1$. 
In other words, $\bm{x}$ is geodesically simple if and only if 
$|\varphi(\bm{x})|=|\bm{x}|$. 
\end{definition}
The following is straightforward (using Lemma \ref{lem:global}). 
\begin{proposition}
\label{prop:shortgeod}
A proper chain $\bm{x}\in P_n(X)$ is geodesically simple if either 
\begin{itemize}
\item $1\leq n\leq 2$, or 
\item $n\geq 1$ and $|\bm{x}|<m_X$. 
\end{itemize}
When $n=3$, $\bm{x}=(x_0, x_1, x_2, x_3)\in P_3(X)$ with $\varphi(\bm{x})=
(x_0, x_3)$ is geodesically simple if and only if it is not a $4$-cut. 
\end{proposition}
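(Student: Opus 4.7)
The plan is to reformulate ``geodesically simple'' as the identity $|\varphi(\bm{x})|=|\bm{x}|$ (already observed in Definition~\ref{def:geodsimple}) and verify each of the three claims by short, largely independent arguments. The only nontrivial input is Lemma~\ref{lem:global}, which handles the case $|\bm{x}|<m_X$.

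First I would dispose of the low-degree cases. For $n=1$ both endpoints are singular by definition, so $\varphi(\bm{x})=\bm{x}$ and the identity is tautological. For $n=2$ the only thing to check is the middle point: if $x_1$ is singular then again $\varphi(\bm{x})=\bm{x}$, while if $x_1$ is smooth then the preceding proposition gives $d(x_0,x_2)=d(x_0,x_1)+d(x_1,x_2)$, whence $|\varphi(\bm{x})|=d(x_0,x_2)=|\bm{x}|$.

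For the main case $n\geq 1$ with $|\bm{x}|<m_X$, I would list the singular points as $x_{i_1}=x_0,x_{i_2},\dots,x_{i_m}=x_n$ and analyze a single segment $(x_{i_\alpha},x_{i_\alpha+1},\dots,x_{i_{\alpha+1}})$. Its intermediate points are smooth in $\bm{x}$, and they remain smooth after restriction to the segment because smoothness of $x_j$ depends only on the three consecutive points $x_{j-1},x_j,x_{j+1}$. The segment length is at most $|\bm{x}|<m_X$, so Lemma~\ref{lem:global} applies and gives $|(x_{i_\alpha},\dots,x_{i_{\alpha+1}})|=d(x_{i_\alpha},x_{i_{\alpha+1}})$. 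Summing over $\alpha$ then yields $|\bm{x}|=|\varphi(\bm{x})|$.

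The final $n=3$ claim is essentially a tautology once the definitions are unfolded: the hypothesis $\varphi(\bm{x})=(x_0,x_3)$ says that both $x_1$ and $x_2$ are smooth, i.e.\ the first two conditions in Definition~\ref{def:4cut} hold, so $\bm{x}$ is geodesically simple iff $d(x_0,x_3)=|\bm{x}|$, which by the triangle inequality is the negation of the third condition for a $4$-cut. There is no serious obstacle; the only mildly delicate point is the inheritance of smoothness by subchains in the middle case, which is immediate from its locality.
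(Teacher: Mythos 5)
Your argument is correct and is exactly the route the paper intends (it omits the proof, noting only that it is "straightforward using Lemma \ref{lem:global}"): handle $n\leq 2$ directly, apply Lemma \ref{lem:global} segment-by-segment between consecutive singular points when $|\bm{x}|<m_X$, and unfold the definitions for the $n=3$ equivalence with $4$-cuts. The only point you leave implicit — segments with no intermediate points, where the lemma's hypothesis $n>1$ fails — is trivially fine, so there is nothing to add.
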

We shall see that the set of all geodesically simple chains form a subcomplex 
of $B_\bullet^\ell(X)$ and that the boundary operator preserves the frames. 

\begin{proposition}
\label{prop:geodsimple01}
Let $\bm{x}=(x_0, \dots x_n)\in P_n(X)$ be a geodesically simple chain 
and $J=\{x_{j_1}, \dots, x_{j_p}\}$ be an arbitrary set of smooth points of 
$\bm{x}$. Let $\bm{x'}:=\bm{x}\smallsetminus\{x_{j_1}, \dots, x_{j_p}\}$. 
Then $|\bm{x'}|=|\bm{x}|$. 
\end{proposition}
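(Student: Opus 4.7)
The plan is to use the definition of geodesic simplicity to reduce the problem, sub-chain by sub-chain, to a direct application of Proposition \ref{prop:easy}. By the definition, geodesic simplicity of $\bm{x}$ means that $|\varphi(\bm{x})| = |\bm{x}|$. Since each sub-chain $(x_{i_\alpha}, x_{i_\alpha+1}, \dots, x_{i_{\alpha+1}})$ between consecutive singular points contributes at least $d(x_{i_\alpha}, x_{i_{\alpha+1}})$ to $|\bm{x}|$ by the triangle inequality, equality in the total forces equality in each sub-chain, i.e.
\[
|(x_{i_\alpha}, x_{i_\alpha+1}, \dots, x_{i_{\alpha+1}})| = d(x_{i_\alpha}, x_{i_{\alpha+1}})
\]
for every $\alpha = 1, \dots, m-1$.

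Next I would partition the set $J$ of removed smooth points according to which sub-chain they live in. Since a smooth point $x_{j_k}$ satisfies $1 \leq j_k \leq n-1$ and is distinct from every singular point, each element of $J$ falls strictly between two consecutive singular indices $i_\alpha < j_k < i_{\alpha+1}$. So for each $\alpha$, the removal affects only the interior of the $\alpha$-th sub-chain, leaving its endpoints $x_{i_\alpha}, x_{i_{\alpha+1}}$ in place. Applying the second part of Proposition \ref{prop:easy} to the $\alpha$-th sub-chain (which is a proper chain whose total length equals the distance between its ends), the sum of distances along any subsequence that retains both endpoints still equals $d(x_{i_\alpha}, x_{i_{\alpha+1}})$.

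Finally I would sum these equalities over $\alpha$. The chain $\bm{x'}$ decomposes into the concatenation of the truncated sub-chains, so
\[
|\bm{x'}| = \sum_{\alpha=1}^{m-1} d(x_{i_\alpha}, x_{i_{\alpha+1}}) = |\varphi(\bm{x})| = |\bm{x}|,
\]
which is the desired identity. The only potential pitfall is making sure that removing smooth points does not accidentally drop a singular endpoint of some sub-chain; this is ruled out by the hypothesis that $J$ consists only of smooth points (so $i_1 = 0$ and $i_m = n$ in particular remain), which also guarantees the sub-chain decomposition of $\bm{x'}$ stays well-defined. Beyond this bookkeeping, the argument is immediate from Proposition \ref{prop:easy}, so I do not expect any serious obstacle.
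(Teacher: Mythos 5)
Your proof is correct, but it takes a more granular route than the paper's. You decompose $\bm{x}$ into the blocks between consecutive singular points, use the equivalence between the two formulations of geodesic simplicity to get $|(x_{i_\alpha},\dots,x_{i_{\alpha+1}})|=d(x_{i_\alpha},x_{i_{\alpha+1}})$ for each block, invoke Proposition \ref{prop:easy} to see that deleting interior (necessarily smooth) points of a block does not change its length, and then sum over the blocks; the bookkeeping you flag (every deleted point lies strictly inside one block, all singular points survive) is exactly what makes the concatenation argument work. The paper instead avoids any block decomposition and any appeal to Proposition \ref{prop:easy}: it simply squeezes, observing that deleting points can only decrease length, so
\[
|\bm{x}|\ \geq\ |\bm{x'}|\ \geq\ |(x_{i_1},\dots,x_{i_m})| = |\varphi(\bm{x})| = |\bm{x}|,
\]
where the middle inequality holds because $\varphi(\bm{x})$ is still a subsequence of $\bm{x'}$ (only smooth points were removed). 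The squeeze is shorter and needs only monotonicity of length under deletion plus the definition $|\varphi(\bm{x})|=|\bm{x}|$; your version costs a little more writing but makes the per-segment equalities explicit, which is the same information one wants again when checking that the frame itself is preserved (Proposition \ref{prop:geodsimplechain}). Either argument is complete.
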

\begin{proof}
Let $\{x_{i_1}, \dots, x_{i_m}\}$ be the set of all singular points as above. 
Then we have 
\[
\begin{split}
|\bm{x}|&\geq |\bm{x'}|\\
&\geq
|(x_{i_1}, \dots, x_{i_m})|. 
\end{split}
\]
By the assumption that $\bm{x}$ is a geodesically simple chain, the right 
hand side is equal to $|\bm{x}|$. 
Thus we have $|\bm{x}|= |\bm{x'}|$. 
\end{proof}

\begin{proposition}
\label{prop:geodsimplechain}
Let $\bm{x}=(x_0, \dots x_n)\in P_n(X)$ be a geodesically simple chain 
and $x_j$ be a smooth point of $\bm{x}$. 
Then $\bm{x'}:=\bm{x}\smallsetminus\{x_j\}$ is a 
geodesically simple chain with $\varphi(\bm{x'})=\varphi(\bm{x})$. 
\end{proposition}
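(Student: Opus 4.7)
The plan is to verify the two claims $\varphi(\bm{x'}) = \varphi(\bm{x})$ and $|\varphi(\bm{x'})| = |\bm{x'}|$ in that order. The starting point is that geodesic simplicity of $\bm{x}$, together with Proposition \ref{prop:easy} applied to each subchain $x_{i_\alpha}, x_{i_\alpha+1}, \dots, x_{i_{\alpha+1}}$ between two consecutive singular points, turns each such subchain into a geodesic in the sense that all sub-distances are additive: $d(x_p, x_q) = \sum_{r=p}^{q-1} d(x_r, x_{r+1})$ for $i_\alpha \le p \le q \le i_{\alpha+1}$. Since $x_j$ is smooth, we have $i_\alpha < j < i_{\alpha+1}$ for some $\alpha$. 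After removing $x_j$, only the neighbors $x_{j-1}$ and $x_{j+1}$ see a change in their immediate triple; every other point keeps its smooth/singular status by inspection, so I only need to analyze $x_{j-1}$ and $x_{j+1}$, and by symmetry it suffices to treat $x_{j-1}$.

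There are two cases. If $x_{j-1}$ is smooth in $\bm{x}$, then $j-1 > i_\alpha$, hence the four points $x_{j-2}, x_{j-1}, x_j, x_{j+1}$ all lie in the geodesic segment indexed by $[i_\alpha, i_{\alpha+1}]$; additivity then forces $d(x_{j-2}, x_{j+1}) = d(x_{j-2}, x_{j-1}) + d(x_{j-1}, x_{j+1})$, so $x_{j-1}$ remains smooth in $\bm{x'}$. If $x_{j-1}$ is singular, then either $j-1 = 0$, in which case $x_{j-1}$ is trivially singular in $\bm{x'}$ as an endpoint, or $j-1 = i_\alpha \ge 1$. In the latter case I argue by contradiction: if $x_{i_\alpha}$ were smooth in $\bm{x'}$, then $d(x_{i_\alpha-1}, x_{j+1}) = d(x_{i_\alpha-1}, x_{i_\alpha}) + d(x_{i_\alpha}, x_{j+1})$; combined with the segment identity $d(x_{i_\alpha}, x_{j+1}) = d(x_{i_\alpha}, x_j) + d(x_j, x_{j+1})$ and the triangle inequality applied between $x_{i_\alpha-1}$ and $x_j$, the triangle inequality must saturate, yielding $x_{i_\alpha-1} \prec x_{i_\alpha} \prec x_j$, which contradicts the singularity of $x_{i_\alpha}$ in $\bm{x}$.

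Once $\varphi(\bm{x'}) = \varphi(\bm{x})$ is established, geodesic simplicity of $\bm{x'}$ is immediate from Proposition \ref{prop:geodsimple01}:
\[
|\varphi(\bm{x'})| = |\varphi(\bm{x})| = |\bm{x}| = |\bm{x'}|.
\]
The main obstacle is the last sub-case, where deleting $x_j$ brings the more distant point $x_{j+1}$ next to the singular endpoint $x_{i_\alpha}$, and one must rule out the ``accidental smoothening'' of a singular point. This is the only place where the combinatorial interplay between interior additivity within a geodesic segment and the strict triangle inequality across a segment boundary has to be exploited carefully.
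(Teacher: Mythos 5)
Your proposal is correct, but it establishes the key claim $\varphi(\bm{x'})=\varphi(\bm{x})$ by a genuinely different mechanism than the paper. You argue locally: only the neighbors $x_{j\pm1}$ of the deleted smooth point can change status, and you check them by cases, using Proposition \ref{prop:easy} to get full additivity of distances inside each segment between consecutive singular points (this handles the case where the neighbor is smooth), and a triangle-inequality saturation argument to rule out a singular neighbor $x_{i_\alpha}=x_{j-1}$ becoming smooth in $\bm{x'}$ (if it did, the sandwich $d(x_{i_\alpha-1},x_{j+1})\le d(x_{i_\alpha-1},x_j)+d(x_j,x_{j+1})\le d(x_{i_\alpha-1},x_{i_\alpha})+d(x_{i_\alpha},x_j)+d(x_j,x_{j+1})$ forces $x_{i_\alpha-1}\prec x_{i_\alpha}\prec x_j=x_{i_\alpha+1}$, contradicting singularity); this is sound, including the endpoint subcase. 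The paper instead avoids all case analysis by using the length characterizations of smooth and singular points applied to the whole chain: for a smooth $x_k$ ($k\neq j$), Proposition \ref{prop:geodsimple01} gives $|\bm{x}\smallsetminus\{x_j,x_k\}|=|\bm{x}|=|\bm{x'}|$, so $x_k$ stays smooth, while for a singular $x_{i_\alpha}$ one has $|\bm{x}\smallsetminus\{x_{i_\alpha},x_j\}|\le|\bm{x}\smallsetminus\{x_{i_\alpha}\}|<|\bm{x}|=|\bm{x'}|$, so $x_{i_\alpha}$ stays singular. Your final step (deducing geodesic simplicity from $|\varphi(\bm{x'})|=|\varphi(\bm{x})|=|\bm{x}|=|\bm{x'}|$) coincides with the paper's. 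The trade-off: the paper's length-based argument is shorter, uniform in the point being tested, and makes no reference to which segment $x_j$ lies in; your local analysis is more elementary and makes geometrically explicit exactly where an ``accidental smoothening'' could occur and why the strict triangle inequality at a singular point forbids it, at the cost of several cases and an appeal to Proposition \ref{prop:easy}.
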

\begin{proof}
Let $x_k$ be a smooth point of $\bm{x}$ such that $k\neq j$. 
Proposition \ref{prop:geodsimple01} implies that $x_k$ is a smooth 
point of $\bm{x'}$. 

Let $x_{i_\alpha}$ be a singular point of $\bm{x}$. Then 
\[
\begin{split}
|\bm{x}\smallsetminus\{x_{i_\alpha}, x_j\}|
&\leq
|\bm{x}\smallsetminus\{x_{i_\alpha}\}|\\
&<
|\bm{x}|\\
&=
|\bm{x}\smallsetminus\{x_j\}|. 
\end{split}
\]
Hence $x_{i_{\alpha}}$ is a singular  point of $\bm{x'}$. 
Thus we have $\varphi(\bm{x'})=\varphi(\bm{x})$. 
Then since $|\varphi(\bm{x'})|=|\varphi(\bm{x})|=|\bm{x}|=|\bm{x'}|$, 
$\bm{x'}$ is geodesically simple. 
\end{proof}
\begin{remark}
If there exists a $4$-cut, the boundary operator does not preserve 
the frame in general. 
\end{remark}

\begin{definition}
\label{def:geodsimpchaincpx}
Define $B_{n}^{\simp, \ell}(X)$ to be the 
submodule of $B_{n}^{\ell}(X)$ generated by 
geodesically simple chains. 
\end{definition}

\begin{definition}
\label{def:framedchain}
Let $F\in P_{m}(X)$. 
\[
P_n^F(X):=\{\bm{x}\in P_n^{|F|}(X)\mid \varphi(\bm{x})=F\}. 
\]
\end{definition}
Note that in case $\varphi(F)\neq F$, $P_n^F(X)=\emptyset$. 
If $\bm{x}\in P_n^F(X)$, then $\bm{x}$ is geodesically simple. 
We also have $P_n^F(X)\subset P_n^{|F|}(X)$.

\begin{definition}
Let $F\in P_{m}(X)$. Let $n\geq m$ and define 
$B_{n}^{F}(X)$ by 
\begin{equation*}
B_{n}^{F}(X)=
\bigoplus_{\bm{x}\in P_n^F(X)} 
\Z \cdot\langle \bm{x} \rangle.
\end{equation*}
\end{definition}

Clearly we have $B_n^{F}(X)\subset B_n^{\simp, |F|}(X)$, 
and moreover, 
\begin{equation}
\label{eq:decomp}
B_{n}^{\simp, \ell}(X)=
\bigoplus_{F\in P_{\leq n}^\ell(X)}
B_{n}^{F}(X), 
\end{equation}
where $P_{\leq n}(X)=P_1(X)\sqcup P_2(X)\sqcup\cdots\sqcup P_n(X)$. 
Proposition \ref{prop:geodsimple01} and Proposition \ref{prop:geodsimplechain} 
show that 
$B_{\bullet}^{\simp, \ell}(X)$ and 
$B_{\bullet}^{F}(X)$ form chain complexes. 
Thus we can define 
the \emph{magnitude homology with geodesically simple chains} by 
\[
H_n^{\simp, \ell}(X):=H_n(B_{\bullet}^{\simp, \ell}(X)), 
\]
and the \emph{framed magnitude homology with frame $F$} by 
\[
H_n^{F}(X):=H_n(B_{\bullet}^{F}(X)). 
\]
We have the following decomposition of magnitude homology 
in terms of framed magnitude homology groups. 
\begin{theorem}
\label{thm:Fdec}
Let $X$ be a metric space, $n>0$ and $\ell>0$. Then, 
\begin{itemize}
\item[(1)] 
\begin{equation*}
H_{n}^{\simp, \ell}(X) \simeq \bigoplus_{F\in P_{\leq n}^\ell(X)}H_{n}^{F}(X).
\end{equation*}
\item[(2)] 
If $n=1$, 
\begin{equation*}
H_{1}^{\Sigma, \ell}(X) 
\simeq \bigoplus_{F\in P_{1}^\ell(X)}H_{1}^{F}(X).
\end{equation*}
\item[(3)] 
Let $n>0$ and $\ell>0$. If $\ell<m_X$, then 
\begin{equation*}
H_n^{\Sigma, \ell}(X)\simeq
\bigoplus_{F\in P_{\leq n}^\ell(X)}
H_n^{F}(X). 
\end{equation*}
\end{itemize}
\end{theorem}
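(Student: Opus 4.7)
The plan is to derive all three statements from the chain-level decomposition (\ref{eq:decomp}) together with the boundary-preservation results in Propositions \ref{prop:geodsimple01} and \ref{prop:geodsimplechain}. The technical work has essentially been done already; the theorem amounts to organizing which chain complexes coincide in each regime and then quoting (1) to obtain (2) and (3).

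For (1), the decomposition $B_n^{\simp,\ell}(X)=\bigoplus_F B_n^F(X)$ in (\ref{eq:decomp}) is tautological at the chain level, since every geodesically simple chain has a unique frame. Proposition \ref{prop:geodsimplechain} then guarantees that $\partial_n$ maps $B_n^F(X)$ into $B_{n-1}^F(X)$: a nonzero boundary summand $\partial_{n,i}\langle\bm{x}\rangle$ arises only when $x_i$ is a smooth point of $\bm{x}$, in which case $\bm{x}\smallsetminus\{x_i\}$ remains geodesically simple with frame $\varphi(\bm{x})$. Hence each $(B_\bullet^F(X),\partial_\bullet)$ is a direct summand subcomplex of $(B_\bullet^{\simp,\ell}(X),\partial_\bullet)$, and passing to homology gives the desired isomorphism.

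For (2) and (3), the task reduces to showing that the inclusion $B_\bullet^{\simp,\ell}(X)\hookrightarrow B_\bullet^\ell(X)$ is in fact an equality in a sufficient range of degrees, so that $H_n^{\Sigma,\ell}(X)=H_n^{\simp,\ell}(X)$ and (1) applies. In case (2), Proposition \ref{prop:shortgeod} says every proper chain of degree $1$ or $2$ is automatically geodesically simple; combined with $B_0^\ell(X)=0$ for $\ell>0$, this gives $B_n^\ell(X)=B_n^{\simp,\ell}(X)$ for $n\in\{0,1,2\}$, which is enough to compute $H_1$. In case (3), when $\ell<m_X$ the same proposition shows every proper chain of length $\ell$ is geodesically simple, so $B_n^\ell(X)=B_n^{\simp,\ell}(X)$ in all degrees.

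There is no genuine obstacle once Propositions \ref{prop:geodsimple01} and \ref{prop:geodsimplechain} are available: the decomposition is essentially formal, and the hypotheses in (2) and (3) are precisely those that let one identify the simplified chain complex with the full one. The only minor care required is to confirm that the identification extends to degree $n+1$ (the source of $\partial_{n+1}$) as well as to degree $n$; in case (2) this is why the equality is recorded through $n=2$, and in case (3) the condition $\ell<m_X$ applies uniformly in $n$.
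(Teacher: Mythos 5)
Your proposal is correct and follows essentially the same route as the paper: part (1) comes from the frame-wise direct sum decomposition of $B_\bullet^{\simp,\ell}(X)$ (justified by Propositions \ref{prop:geodsimple01} and \ref{prop:geodsimplechain}), while (2) and (3) follow by using Proposition \ref{prop:shortgeod} to identify $B_n^{\ell}(X)$ with $B_n^{\simp,\ell}(X)$ for $n\leq 2$ or $\ell<m_X$. Your extra remarks (that $B_0^{\ell}(X)=0$ for $\ell>0$ and that the identification is needed through degree $n+1$) only make explicit what the paper leaves implicit.
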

\begin{proof}
(1) is clear from the direct sum decomposition of 
the chain complex (\ref{eq:decomp}). 
It follows from 
Proposition \ref{prop:shortgeod} that $B_n^{\ell}(X)=B_n^{\simp, \ell}(X)$  
if either $n\leq 2$ or $\ell<m_X$. These induce (2) and (3). 
\end{proof}

\begin{remark}
Theorem \ref{thm:Fdec} (2) is just re-phrasing \cite[Corollary 7.6]{Lei-Shu}. 
\end{remark}

\section{Interval decompositions of framed magnitude homology}

\subsection{The order complex of a poset}

We first recall the notion of the order complex of a poset (see \cite{Wac} 
for further details). 
Let $P$ be a poset. Recall that the order complex $\Delta(P)$ of $P$ is 
a simplicial complex defined by 
\[
\Delta(P)=\{ (x_0, \dots, x_n) \mid n\geq 0, x_i\in P, x_0<x_1<\cdots<x_n\}. 
\]
Let $C_\bullet(P)$ be the reduced chain complex of 
the order complex. 
More precisely, define the module $C_n(P)$ by 
\begin{equation*}
C_n(P):=
\left\{
\begin{array}{cl}
\bigoplus\limits_{x_0<\cdots<x_n}
\Z \cdot\langle x_0, \cdots , x_n \rangle , 
&\mbox{ for $n\geq 0$}, \\
\Z, &
\mbox{ for $n=-1$}, 
\end{array}
\right.
\end{equation*}
and the boundary map $\partial_{n}:C_n(P)\longrightarrow 
C_{n-1}(P)$ by 
$\partial_{n}=\sum_{i=0}^n(-1)^i\partial_{n,i}$ for $n\geq 1$, 
where the map $\partial_{n,i}$ discards $x_i$, 
\begin{equation*}
\partial_{n,i} (\langle x_0,\cdots ,x_n \rangle)=
\langle x_0,\cdots,\widehat{x}_i,\cdots ,x_n \rangle.
\end{equation*}
The map $\partial_0:C_0(P)\longrightarrow\Z$ is defined by 
$\partial_0(\langle x\rangle)=1$ for all $x\in X$.

We denote the $n$-th homology $H_n(C_{\bullet}(P))$ 
by  $\widetilde{H}_n(P)$. Note that $\widetilde{H}_n(P)$ is the 
reduced homology of $\Delta(P)$. 
\begin{remark}
We may suppose $C_{-1}(P)$ is a rank one free abelian group 
$\Z \cdot\langle \emptyset \rangle$ generated by the symbol 
$\langle\emptyset\rangle$. The $(-1)$-st homology is 
\begin{equation*}
\widetilde{H}_{-1}(P)=
\left\{
\begin{array}{cc}
\Z, & \mbox{ if }P=\emptyset, \\
0, & \mbox{ if }P\neq\emptyset. 
\end{array}
\right.
\end{equation*}
We will see that the $1$-st magnitude homology $H_1^{\Sigma, \ell}(X)$ 
is generated by $(-1)$-st homology groups of interval posets. 
\end{remark}

The following will be frequently used. 
\begin{proposition}
Let $P$ be a (non-empty) totally ordered poset. Then 
$\widetilde{H}_n(P)=0$ for all $n\in\Z$. 
\end{proposition}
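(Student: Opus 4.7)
Since $P$ is totally ordered, every finite subset of $P$ is a chain, so the order complex $\Delta(P)$ is combinatorially the full simplex on the vertex set $P$ (possibly infinite-dimensional). I plan to show that its reduced chain complex $C_\bullet(\Delta(P))$ is null-homotopic by an explicit cone contraction at a fixed vertex.

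Using that $P$ is non-empty, fix some $p \in P$ and define $h_n \colon C_n(\Delta(P)) \to C_{n+1}(\Delta(P))$ for $n \geq -1$ as follows. Set $h_{-1}(\langle\emptyset\rangle) := \langle p\rangle$. For $n \geq 0$ and a basis element $\sigma = \langle x_0 < x_1 < \cdots < x_n\rangle$, set $h_n(\sigma) := 0$ if $p \in \{x_0, \ldots, x_n\}$; otherwise let $k \in \{0, 1, \ldots, n+1\}$ be the unique index with $x_{k-1} < p < x_k$ (using the conventions $x_{-1} = -\infty$ and $x_{n+1} = +\infty$) and put
\[
h_n(\sigma) := (-1)^k \langle x_0, \ldots, x_{k-1}, p, x_k, \ldots, x_n\rangle.
\]

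The key step is to verify $\partial_{n+1} h_n + h_{n-1}\partial_n = \mathrm{id}$ on $C_n$ for every $n \geq -1$. When $p \notin \sigma$, the face of $h_n(\sigma)$ obtained by omitting the inserted $p$ itself contributes $(-1)^{2k}\sigma = \sigma$, while each remaining face of $h_n(\sigma)$ — obtained by first inserting $p$ at position $k$ and then removing some $x_i$ — cancels against the corresponding term of $h_{n-1}(\partial_n\sigma)$ — obtained by first removing $x_i$ and then inserting $p$ — with opposite sign; the cancellation hinges on the fact that the insertion position for $p$ in $\sigma$ with $x_i$ removed is $k-1$ for $i < k$ and $k$ for $i \geq k$, producing exactly the sign flip needed. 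When $p = x_k \in \sigma$, the map $h_n$ kills $\sigma$, and in $h_{n-1}(\partial_n\sigma)$ the only face that is not annihilated by $h_{n-1}$ is the one omitting $x_k$, which is sent back to $(-1)^k\cdot(-1)^k\sigma = \sigma$. The $n = -1$ case reduces to $\partial_0(\langle p\rangle) = 1$, which holds by definition of $\partial_0$.

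The main (and essentially only) obstacle is the sign bookkeeping in the above cancellation, which is entirely standard for a cone construction on a simplex. Once the identity $\partial h + h\partial = \mathrm{id}$ is in place, $\mathrm{id}_{C_\bullet(\Delta(P))}$ is chain-homotopic to zero, and hence $\widetilde{H}_n(P) = 0$ for all $n \in \mathbb{Z}$.
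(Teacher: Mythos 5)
Your argument is correct and follows the same idea as the paper: you observe that $\Delta(P)$ is the full simplex on the vertex set $P$, which is exactly the paper's proof, except that where the paper simply cites the acyclicity of a simplex you make it self-contained by writing out the standard cone contraction $h$ at a chosen vertex $p$ and verifying $\partial h + h\partial = \mathrm{id}$ (including the reduced degree $-1$). The sign bookkeeping with the insertion position $k$ is right, and the explicit homotopy also covers infinite $P$ without any extra remark, so no changes are needed.
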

\begin{proof}
If $P$ is totally ordered, then $\Delta(P)$ is a simplex. Therefore, 
all reduced homology groups vanish. 
\end{proof}

\subsection{Tensor product chain complexes}

\label{sec:tensor}

Next we settle some notations on tensor products of chain complexes 
((\cite[2.7.1]{Wei}). 
Let $(C_{\bullet}^1,\partial_{\bullet}^1), 
(C_{\bullet}^2,\partial_{\bullet}^2), \cdots , 
(C_{\bullet}^m,\partial_{\bullet}^m)$ be chain complexes. 
The chain complex
$\Tot(C_{\bullet}^1 \otimes \cdots \otimes C_{\bullet}^m)_{\bullet}$ 
is defined by 
\begin{equation*}
\Tot(C_{\bullet}^1 \otimes \cdots \otimes C_{\bullet}^m)_n 
:=\bigoplus_{i_1+\cdots +i_m=n}\left( C_{i_{1}}^1 \otimes\cdots\otimes 
C_{i_{m}}^m \right), 
\end{equation*}
and the boundary map is defined by 
$
\sum_{i_1+\cdots+i_m=n}\partial_{i_1, \cdots ,i_{m}}, 
$
where 
\[
\partial_{i_1, \cdots ,i_{m}}: 
C_{i_1}^1\otimes\cdots\otimes C_{i_m}^m\longrightarrow 
\bigoplus_{j_1+\cdots+j_m=n-1}
C_{j_1}^1\otimes\cdots\otimes C_{j_m}^m 
\]
is defined by 
\[
\partial_{i_1,\cdots, i_m}:= 
\bigoplus_{h=1}^m
\left((-1)^{i_1+\cdots+i_{h-1}}\id_{C_{i_1}^1} \otimes \cdots \otimes \partial_{i_{h}}^h\otimes\cdots \otimes \id_{C_{i_m}^m} \right).
\]
The chain complex $\Tot(C_{\bullet}^1 \otimes \cdots \otimes C_{\bullet}^m)_{\bullet}$ is called tensor product chain complex of $(C_{\bullet}^1,\partial_{\bullet}^1), (C_{\bullet}^2,\partial_{\bullet}^2), \cdots ,(C_{\bullet}^m,\partial_{\bullet}^m)$.
The homology $H_n(\Tot(C_{\bullet}^1 \otimes \cdots \otimes C_{\bullet}^m)_{\bullet})$ is denoted by $H_n(C_{\bullet}^1 \otimes \cdots \otimes C_{\bullet}^m)$.
We also have the following associativity. 
\[
\Tot(\Tot(C_\bullet^1\otimes C_\bullet^2 )_\bullet\otimes C_\bullet^3)_\bullet
\simeq
\Tot(C_\bullet^1\otimes \Tot(C_\bullet^2 \otimes C_\bullet^3)_\bullet)_\bullet
\simeq
\Tot(C_\bullet^1\otimes C_\bullet^2 \otimes C_\bullet^3)_\bullet. 
\]

\begin{theorem}
\label{thm:Kunneth formula for complexes}
\textup{(\cite{Wei}, Theorem 3.6.3 (K\"{u}nneth formula for complexes))} 
If $C_{\bullet}^i$ is a chain complex of free $\Z$-modules, then 
\begin{flalign*}
H_n(C_{\bullet}^1 \otimes \cdots \otimes C_{\bullet}^{m+1})
\simeq
\bigoplus_{p+q=n} & H_p( C_{\bullet}^1\otimes\cdots\otimes C_{\bullet}^m )
\otimes
H_q( C_{\bullet}^{m+1}) \nonumber \\ \oplus
&\bigoplus_{p+q=n-1} \Tor_1(H_p(C_{\bullet}^1 \otimes \cdots \otimes C_{\bullet}^{m}),H_q(C_{\bullet}^{m+1})).
\end{flalign*}
\end{theorem}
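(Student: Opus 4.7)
The plan is to reduce the $(m+1)$-factor statement to the two-factor Künneth formula via the associativity of $\Tot$ stated just above the theorem. Setting $D_\bullet := \Tot(C_\bullet^1 \otimes \cdots \otimes C_\bullet^m)_\bullet$ and $E_\bullet := C_\bullet^{m+1}$, the associativity gives $\Tot(C_\bullet^1 \otimes \cdots \otimes C_\bullet^{m+1})_\bullet \simeq \Tot(D_\bullet \otimes E_\bullet)_\bullet$. Note that $D_\bullet$ is automatically a complex of free $\Z$-modules, since the termwise tensor product of free modules is free. It therefore suffices to prove the two-factor version
\[
H_n(D_\bullet \otimes E_\bullet) \simeq \bigoplus_{p+q=n} H_p(D_\bullet) \otimes H_q(E_\bullet) \oplus \bigoplus_{p+q=n-1} \Tor_1(H_p(D_\bullet), H_q(E_\bullet))
\]
for two complexes of free $\Z$-modules.

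For the two-factor statement, the standard strategy is to split $E_\bullet$ into its subcomplex of cycles and the boundaries. Let $Z_q \subset E_q$ denote cycles and $B_q \subset Z_q$ boundaries, viewed as subcomplexes with zero differential. Since $\Z$ is a PID and each $E_q$ is free, both $Z_q$ and $B_{q-1}$ are free, and the short exact sequence of chain complexes
\begin{equation*}
0 \longrightarrow Z_\bullet \longrightarrow E_\bullet \longrightarrow B_\bullet[-1] \longrightarrow 0
\end{equation*}
has $B_\bullet[-1]$ termwise free, hence termwise flat. Tensoring with $D_\bullet$ therefore preserves exactness and yields
\begin{equation*}
0 \longrightarrow D_\bullet \otimes Z_\bullet \longrightarrow D_\bullet \otimes E_\bullet \longrightarrow D_\bullet \otimes B_\bullet[-1] \longrightarrow 0.
\end{equation*}

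Next I would take the associated long exact sequence in homology. Because the differentials on $Z_\bullet$ and $B_\bullet$ vanish and their components are free, one has $H_n(D_\bullet \otimes Z_\bullet) = \bigoplus_{p+q=n} H_p(D_\bullet) \otimes Z_q$ and similarly for $B_\bullet$. A direct inspection of the snake lemma shows the connecting homomorphism is induced by the inclusion $B_q \hookrightarrow Z_q$, tensored with $\id$ on $H_p(D_\bullet)$. Since $0 \to B_q \to Z_q \to H_q(E_\bullet) \to 0$ is a free resolution of $H_q(E_\bullet)$, the kernel and cokernel of that connecting map are precisely $\bigoplus_{p+q=n} H_p(D_\bullet)\otimes H_q(E_\bullet)$ and $\bigoplus_{p+q=n-1}\Tor_1(H_p(D_\bullet), H_q(E_\bullet))$ respectively. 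This produces the natural short exact sequence
\begin{equation*}
0 \to \bigoplus_{p+q=n} H_p(D_\bullet) \otimes H_q(E_\bullet) \to H_n(D_\bullet \otimes E_\bullet) \to \bigoplus_{p+q=n-1} \Tor_1(H_p(D_\bullet), H_q(E_\bullet)) \to 0.
\end{equation*}

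The main obstacle is the final splitting of this sequence. The splitting is non-canonical: one constructs it by choosing a $\Z$-linear retraction of the inclusion $B_q \hookrightarrow Z_q$, which exists precisely because $B_q$ is a free (hence projective) $\Z$-module. This uses the hypothesis that each $C_\bullet^i$ is free in an essential way, and it is the only nontrivial input beyond homological bookkeeping. Once the splitting is in place, the stated isomorphism follows, completing the induction.
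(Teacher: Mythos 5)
Your reduction via the associativity of $\Tot$ is exactly the paper's argument; the paper then simply cites Weibel, Theorem 3.6.3, for the two-complex K\"unneth formula, whereas you set out to prove it. Your two-factor argument is the standard one and is correct up to and including the short exact sequence: $D_\bullet$ is indeed again a complex of free $\Z$-modules, the sequence $0\to Z_\bullet\to E_\bullet\to B_\bullet[-1]\to 0$ stays exact after tensoring because the third term is termwise flat, the homology of $D_\bullet\otimes Z_\bullet$ and $D_\bullet\otimes B_\bullet[-1]$ computes as you say, and the connecting map is induced by $B_q\hookrightarrow Z_q$, giving the exact sequence with kernel $\bigoplus_{p+q=n}H_p(D_\bullet)\otimes H_q(E_\bullet)$ and cokernel $\bigoplus_{p+q=n-1}\Tor_1(H_p(D_\bullet),H_q(E_\bullet))$.

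The genuine gap is in the last step: a $\Z$-linear retraction of the inclusion $B_q\hookrightarrow Z_q$ need not exist, so your justification of the splitting fails as stated. Projectivity of a submodule does not make it a direct summand; for instance $2\Z\subset\Z$ admits no retraction $\Z\to 2\Z$. What projectivity actually buys you is the splitting of $0\to Z_q\to E_q\to B_{q-1}\to 0$ (because the quotient $B_{q-1}$ is free), i.e.\ a retraction $E_q\to Z_q$, and likewise $D_p\to Z_p(D_\bullet)$ since $D_\bullet$ is free --- note that the splitting genuinely uses freeness of both factors, not just of $E_\bullet$. Composing these retractions with the quotient maps $Z_q\to H_q(E_\bullet)$ and $Z_p(D_\bullet)\to H_p(D_\bullet)$ gives chain maps $E_\bullet\to H_\ast(E_\bullet)$ and $D_\bullet\to H_\ast(D_\bullet)$, where the homology is viewed as a complex with zero differential; their tensor product induces a map $H_n(D_\bullet\otimes E_\bullet)\to\bigoplus_{p+q=n}H_p(D_\bullet)\otimes H_q(E_\bullet)$ which is a left inverse of the injection in your exact sequence, and this yields the (non-natural) direct sum decomposition. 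Alternatively, you could do as the paper does and simply quote Weibel 3.6.3 for the two-factor case, keeping only the associativity step.
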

\begin{proof}
By the associativity,
\begin{equation*}
H_n(C_{\bullet}^1 \otimes \cdots \otimes C_{\bullet}^{m+1})
\simeq
H_n(\Tot(C_{\bullet}^1 \otimes \cdots \otimes C_{\bullet}^m)_{\bullet} \otimes C_{\bullet}^{m+1} ).
\end{equation*}
By K\"{u}nneth formula, 
\begin{flalign*}
H_n(C_{\bullet}^1 \otimes \cdots \otimes C_{\bullet}^{m+1})
\simeq&
H_n(\Tot(C_{\bullet}^1 \otimes \cdots \otimes C_{\bullet}^m)_{\bullet} \otimes C_{\bullet}^{m+1} ) \\ 
\simeq&
\bigoplus_{p+q=n} H_p( C_{\bullet}^1\otimes\cdots\otimes C_{\bullet}^m )
\otimes
H_q( C_{\bullet}^{m+1}) \\ &\oplus
\bigoplus_{p+q=n-1} \Tor_1(H_p(C_{\bullet}^1 \otimes \cdots \otimes C_{\bullet}^{m}),H_q(C_{\bullet}^{m+1})).
\end{flalign*}
\end{proof}

\subsection{The interval decomposition}
\label{sec:ordercpx}

Let $F=(a, b)$ and $|F|=\ell$. 
Suppose $\bm{x}=(x_0, \cdots ,  x_n)\in P_n(X)$ is a geodesically simple 
chain. 
Then $\varphi(\bm{x})=F$ if and only if $x_0=a, x_n=b$ and $|\bm{x}|=\ell$. 
It is also equivalent that 
$x_0=a, x_n=b$ and $x_1, \dots, x_{n-1}\in I_X(a, b)$ form a chain 
$x_1<x_2<\cdots<x_{n-1}$ (of length $n-2$) in the interval poset $I_X(a, b)$. 
By comparing the definitions of boundary maps for magnitude chain complex 
(Definition \ref{def:magnitudechaincpx}) and 
that of order complex, the map 
\[
\langle x_0, x_1, \dots, x_{n-1}, x_n\rangle\longmapsto 
\langle x_1, \dots, x_{n-1}\rangle
\]
gives an isomorphism (up to sign of the boundary operators) 
\begin{equation}
B^{F}_\bullet(X)
\stackrel{\simeq}{\longrightarrow}
C_{\bullet-2}(I_X(a, b)), 
\end{equation}
of chain complexes. In what follows, 
we abbreviate $C_{\bullet}(I_X(a, b))$ as 
$C_{\bullet}(I(a, b))$. 

More generally, we have the following. 
\begin{theorem}
Let $F=(a_0,\cdots,a_m)\in P_{m}(X)$ and $\ell =|F|$. 
Then there is  an isomorphism of chain complexes 
\begin{equation*}
B_{\bullet}^{F}(X)
\simeq
\Tot(C_{\bullet}(I(a_0,a_1))\otimes \cdots \otimes C_{\bullet}(I(a_{m-1},a_m)))_{\bullet-2m}, 
\end{equation*}
as chain complexes (up to sign of boundary operators). 
\end{theorem}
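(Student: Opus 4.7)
The plan is to construct the isomorphism on generators by separating the singular points from the smooth points. A geodesically simple chain $\bm{x}\in P_n^F(X)$ with $\varphi(\bm{x})=F=(a_0,\dots,a_m)$ decomposes uniquely as a concatenation
\[
\bm{x}=(a_0,y^{(1)}_1,\dots,y^{(1)}_{n_1},a_1,\dots,a_{m-1},y^{(m)}_1,\dots,y^{(m)}_{n_m},a_m),
\]
with $n_1+\cdots+n_m=n-m$, and I would set
\[
\Phi(\langle\bm{x}\rangle):=\langle y^{(1)}_1,\dots,y^{(1)}_{n_1}\rangle\otimes\cdots\otimes\langle y^{(m)}_1,\dots,y^{(m)}_{n_m}\rangle,
\]
where an empty block (when $n_k=0$) is interpreted as $\langle\emptyset\rangle\in C_{-1}(I(a_{k-1},a_k))$. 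The $k$-th tensor factor then sits in degree $n_k-1$, so the total tensor degree is $\sum_k(n_k-1)=n-2m$, which produces exactly the claimed degree shift.

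The first step is to check that $\Phi$ is a bijection of bases. Because $\bm{x}$ is geodesically simple, the $k$-th segment $(a_{k-1},y^{(k)}_1,\dots,y^{(k)}_{n_k},a_k)$ has total length $d(a_{k-1},a_k)$; applying Proposition \ref{prop:easy} to this segment yields $a_{k-1}\prec y^{(k)}_j\prec a_k$ together with additivity of distances along the segment, so $y^{(k)}_1<\cdots<y^{(k)}_{n_k}$ is a chain in the interval poset $I_X(a_{k-1},a_k)$. Conversely, any family of such chains reassembles into a geodesically simple proper chain with frame $F$, since the interval-poset order encodes precisely the ``lies on a geodesic from $a_{k-1}$ to $a_k$'' relation.

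It remains to match the differentials. By Propositions \ref{prop:geodsimple01} and \ref{prop:geodsimplechain}, the boundary on $B_\bullet^F(X)$ only deletes smooth points and preserves both the frame and geodesic simplicity; deleting $y^{(k)}_j$ from $\bm{x}$ corresponds under $\Phi$ to applying the face map $\partial_{n_k-1,\,j-1}$ to the $k$-th tensor factor. Thus $\Phi\partial^{B}$ and $\partial^{\Tot}\Phi$ agree on every generator up to a sign. The main obstacle is the sign bookkeeping: the position of $y^{(k)}_j$ in $\bm{x}$ is $\sum_{i<k}n_i+(k-1)+j$, while the corresponding tensor sign has exponent $\sum_{i<k}(n_i-1)+(j-1)$, and these differ by the odd constant $2k-1$. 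This uniform parity discrepancy is precisely the ``up to sign'' slack in the statement, and could be absorbed into $\Phi$ by an overall $(-1)^{n-m}$ twist if one wished literal equality.
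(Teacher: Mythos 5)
Your proposal is correct and follows essentially the same route as the paper's own proof: the same block-decomposition map sending a framed chain to the tensor product of its strings of smooth points (with empty blocks read as $\langle\emptyset\rangle\in C_{-1}$), the same degree count giving the shift by $2m$, and the same sign analysis, where the boundary signs disagree by a uniform sign that is absorbed by rescaling the map degreewise (the paper twists by $(-1)^n$, you by $(-1)^{n-m}$, which differ only by the fixed factor $(-1)^m$). The paper likewise treats the correspondence of basis elements as immediate and spends its effort on the explicit sign computation, so nothing essential separates the two arguments.
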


\begin{proof}
The homomorphism $\phi_n$ : $B_n^{F}(X)
\rightarrow
\Tot(C_{\bullet}(I(a_0,a_1))\otimes \cdots \otimes C_{\bullet}(I(a_{m-1},a_m)))_{n-2m}$ is defined as follows (see Figure \ref{fig:decomp}):
\begin{figure}[htbp]
\begin{picture}(350,70)(0,-20)
\thicklines

\put(0,20){$\phi_9($}

\put(30,10){\circle*{4}}
\put(25,0){$a_0$}

\put(30,10){\line(1,3){10}}
{\color{white} 
\put(35,25){\circle*{4}}
}
\put(35,25){\circle{4}}

\put(40,40){\circle*{4}}
\put(35,47){$a_1$}

\put(40,40){\line(2,-3){20}}
{\color{white} 
\multiput(45,32.5)(5,-7.5){3}{\circle*{4}}
}
\multiput(45,32.5)(5,-7.5){3}{\circle{4}}

\put(60,10){\circle*{4}}
\put(55,0){$a_2$}

\put(60,10){\line(0,1){30}}
{\color{white} 
\multiput(60,23)(0,8){2}{\circle*{4}}
}
\multiput(60,23)(0,8){2}{\circle{4}}

\put(60,40){\circle*{4}}
\put(40,40){\circle*{4}}
\put(55,47){$a_3$}

\put(75,20){$)=($}

\put(110,10){\circle*{4}}
\put(105,0){$a_0$}

\put(110,10){\line(1,3){10}}
{\color{white} 
\put(115,25){\circle*{4}}
}
\put(115,25){\circle{4}}

\put(120,40){\circle*{4}}
\put(115,47){$a_1$}

\put(130,20){$)\otimes ($}

\put(160,40){\circle*{4}}
\put(155,47){$a_1$}

\put(160,40){\line(2,-3){20}}
{\color{white} 
\multiput(165,32.5)(5,-7.5){3}{\circle*{4}}
}
\multiput(165,32.5)(5,-7.5){3}{\circle{4}}

\put(180,10){\circle*{4}}
\put(175,0){$a_2$}

\put(190,20){$)\otimes ($}

\put(230,10){\circle*{4}}
\put(225,0){$a_2$}

\put(230,10){\line(0,1){30}}
{\color{white} 
\multiput(230,23)(0,8){2}{\circle*{4}}
}
\multiput(230,23)(0,8){2}{\circle{4}}

\put(230,40){\circle*{4}}
\put(225,47){$a_3$}

\put(250,20){$)$}

\put(85, -20){$\in C_0(I(a_0, a_1))\otimes C_2(I(a_1, a_2))\otimes 
C_1(I(a_2, a_3))$}

\end{picture}
     \caption{Definition of $\phi_n$}\label{fig:decomp}
\end{figure}

\begin{flalign*}
\phi_n&(\langle a_0,x_0^1,\cdots,x_{n_1}^1, a_1,x_0^2,\cdots, x_{n_2}^2, a_2 ,\cdots a_m \rangle) \\
&=\langle x_0^1,\cdots, x_{n_1}^1 \rangle \otimes \langle x_0^2,\cdots,x_{n_2}^2 \rangle \otimes \cdots \otimes \langle x_0^m ,\cdots , x_{n_{m}}^m \rangle.
\end{flalign*}
Then a straightforward computation shows that $(\phi_n)_n$ gives an isomorphism 
of chain complexes. 
\end{proof}

\begin{corollary}
\label{cor:between}
Let $F=(a_0, a_1, \dots, a_m)$. Then 
\begin{equation*}
H_n^{F}(X)\simeq 
H_{n-2m}(\Tot(C_{\bullet}(I(a_0,a_1))\otimes \cdots \otimes 
C_{\bullet}(I(a_{m-1},a_m))).
\end{equation*}
\end{corollary}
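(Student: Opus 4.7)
The plan is to apply the preceding theorem directly. That theorem furnishes an explicit isomorphism of chain complexes
\[
B_{\bullet}^{F}(X) \stackrel{\simeq}{\longrightarrow} \Tot(C_{\bullet}(I(a_0,a_1))\otimes \cdots \otimes C_{\bullet}(I(a_{m-1},a_m)))_{\bullet-2m},
\]
realised concretely by the sign-twisted maps $(-1)^n\phi_n$. Since isomorphic chain complexes have isomorphic homology in every degree, I would simply apply the functor $H_n(-)$ to both sides.

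On the left, by definition $H_n(B_{\bullet}^F(X)) = H_n^F(X)$. On the right, the degree-shift convention $(\,\cdot\,)_{\bullet-2m}$ means that the chain module in position $n$ of the shifted complex is, by fiat, the chain module in position $n-2m$ of the original $\Tot$ complex. Because the shift is by the even integer $2m$, no sign adjustment on the differential is needed, so the two complexes have literally the same cycles and boundaries in corresponding positions. Hence
\[
H_n\bigl(\Tot(C_{\bullet}(I(a_0,a_1))\otimes \cdots \otimes C_{\bullet}(I(a_{m-1},a_m)))_{\bullet-2m}\bigr)
= H_{n-2m}\bigl(\Tot(C_{\bullet}(I(a_0,a_1))\otimes \cdots \otimes C_{\bullet}(I(a_{m-1},a_m)))\bigr),
\]
which is exactly the identification asserted in the corollary.

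There is essentially no obstacle: the corollary is a purely formal consequence of the preceding theorem together with the functoriality of homology. All of the substantive work—constructing $\phi_n$, verifying that $(-1)^n\phi_n$ intertwines the two boundary operators, and checking bijectivity on the level of $\Z$-modules—has already been carried out in the theorem. The corollary merely records the resulting identification of homology groups, with the degree index on the right shifted down by $2m$ to reflect the shift in the chain complex.
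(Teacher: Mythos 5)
Your proposal is correct and is exactly the intended argument: the corollary is an immediate consequence of the preceding theorem's chain-complex isomorphism, obtained by passing to homology and unwinding the degree shift (the paper accordingly states it without proof).
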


\begin{corollary}
\label{cor:vanish}
Let $F=(a_0, a_1, \dots, a_m)$ and $\ell=|F|$. 
Assume that there exists an $i\in\{1, \dots, m\}$ such that 
$I_X(a_{i-1}, a_i)$ is non-empty and totally ordered. 
Then $H_n^{F}(X)=0$ for any $n\in\Z$. 
\end{corollary}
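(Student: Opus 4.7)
The plan is to deduce the vanishing from the tensor product decomposition of $H_n^F(X)$ supplied by Corollary \ref{cor:between} together with the fact that the reduced chain complex of a non-empty totally ordered poset is acyclic (the proposition just before \S \ref{sec:tensor}).

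First I would invoke Corollary \ref{cor:between} to reduce the statement to showing that the iterated tensor product chain complex
\[
\Tot(C_{\bullet}(I(a_0,a_1))\otimes \cdots \otimes C_{\bullet}(I(a_{m-1},a_m)))_{\bullet}
\]
has vanishing homology in every degree. The hypothesis singles out an index $i\in\{1,\dots,m\}$ for which $I_X(a_{i-1},a_i)$ is non-empty and totally ordered, so its order complex is a simplex; hence $\widetilde{H}_q(I_X(a_{i-1},a_i))=0$ for every $q\in\Z$, i.e.\ $H_q(C_\bullet(I(a_{i-1},a_i)))=0$ for all $q$.

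Next I would apply the K\"unneth formula for complexes (Theorem \ref{thm:Kunneth formula for complexes}). Each $C_\bullet(I(a_{j-1},a_j))$ is a simplicial chain complex of free $\Z$-modules, so the hypotheses are met. By the associativity of the $\Tot$ construction recorded in \S \ref{sec:tensor}, I may regroup the factors so that $C_\bullet(I(a_{i-1},a_i))$ appears as a single tensor factor, paired against the $\Tot$ of the remaining $m-1$ complexes. Then in the K\"unneth decomposition every $\otimes$-summand carries a factor of the form $H_q(C_\bullet(I(a_{i-1},a_i)))=0$, and likewise every $\Tor_1$-summand has one slot equal to zero. Consequently the total homology of the tensor product is zero in all degrees, and the corollary follows from Corollary \ref{cor:between}.

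There is essentially no obstacle to overcome: the only bookkeeping consists of regrouping factors via associativity and observing that the chosen factor is acyclic, after which K\"unneth wipes out the entire expression.
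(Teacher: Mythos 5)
Your proof is correct and is essentially the paper's own argument: reduce via Corollary \ref{cor:between} to the tensor product complex, note that $C_\bullet(I(a_{i-1},a_i))$ is a free, acyclic chain complex because a non-empty totally ordered poset has a simplex as order complex, and conclude by the K\"unneth formula that all tensor and $\Tor_1$ terms vanish. The paper states this more tersely, but the substance is identical.
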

\begin{proof}
Since $C_{\bullet}(I_X(a_{i-1}, a_i))$ is $\Z$-free, acyclic, bounded below 
chain complex, 
by K\"unneth formula, $H_n^{F}(X)=0$ for any $n\in\Z$. 
\end{proof}

\section{Computations of magnitude homology groups}

We exhibit several computations of magnitude homology groups. 

\subsection{Geodetic metric spaces} 
\label{sec:geodetic}
Let $X$ be a geodetic metric space. The magnitude homology of $X$ with grading 
$0<\ell<m_X$ can be precisely described by using the following. 
\begin{definition}
\label{def:thin}
Let $F=(a_0, a_1, \dots, a_m)\in P_m(X)$. $F$ is a \emph{thin} frame 
if $\varphi(F)=F$ and $I_X(a_{i-1}, a_i)=\emptyset$ for any $i=1, \dots, m$. 
\end{definition}
The next result is a generalization of \cite[Theorem 7.25]{Lei-Shu}. 
\begin{theorem}
\label{thm:thinframebase}
Let $X$ be a geodetic space, and $0<\ell<m_X$. Then  
\[
H_n^{\Sigma, \ell}(X)\simeq
\bigoplus_{F\in P_n^\ell(X)\ 
\mbox{\tiny is a thin frame}}\Z\cdot\langle F\rangle
\]
\end{theorem}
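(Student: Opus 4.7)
The plan is to combine Theorem \ref{thm:Fdec}(3), Corollary \ref{cor:between}, and Corollary \ref{cor:vanish}. Since $\ell<m_X$, Theorem \ref{thm:Fdec}(3) immediately gives
\[
H_n^{\Sigma,\ell}(X) \simeq \bigoplus_{F\in P_{\leq n}^\ell(X)} H_n^F(X),
\]
so it suffices to compute $H_n^F(X)$ for each frame $F=(a_0,\dots,a_m)$ of length $\ell$ and degree $m$ with $1\leq m\leq n$, and then to identify which frames $F$ contribute a non-zero summand.

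Next I would exploit the geodetic hypothesis: every interval poset $I_X(a_{i-1},a_i)$ is either empty or totally ordered. I split the frames $F$ into two classes. If at least one $I_X(a_{i-1},a_i)$ is non-empty, it is totally ordered; so Corollary \ref{cor:vanish} yields $H_n^F(X)=0$ and $F$ contributes nothing. Otherwise every $I_X(a_{i-1},a_i)$ is empty, which is exactly the definition of $F$ being a thin frame.

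For a thin frame $F$ of degree $m$, there is no interior point available to insert as a smooth point between consecutive singular points, so $P_n^F(X)=\{F\}$ if $n=m$ and $P_n^F(X)=\emptyset$ otherwise. Consequently $B_\bullet^F(X)$ is concentrated in degree $m$ with a single generator $\langle F\rangle$ and zero boundary (the only basis element has no smooth points to discard), so $H_n^F(X)\simeq \Z\cdot\langle F\rangle$ precisely when $n=m$, and vanishes otherwise. As a consistency check, this matches Corollary \ref{cor:between}: each $C_\bullet(I_X(a_{i-1},a_i))$ reduces to $\Z$ sitting in degree $-1$, so the tensor product $\Tot(C_\bullet(I(a_0,a_1))\otimes\cdots\otimes C_\bullet(I(a_{m-1},a_m)))$ is $\Z$ in degree $-m$, and the shift by $-2m$ places the unique non-vanishing homology in magnitude-degree $n=m$.

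Assembling the two cases, the only surviving summands come from thin frames $F$ of degree exactly $n$, each contributing one copy of $\Z$, which is the claimed isomorphism. I do not foresee a substantive obstacle: the theorem is essentially a bookkeeping assembly of the interval decomposition from \S \ref{sec:ordercpx} with the geodetic hypothesis, and the only delicate point is tracking the degree shift $-2m$ in Corollary \ref{cor:between} against the fact that empty-interval order complexes live in degree $-1$.
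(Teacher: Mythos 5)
Your proposal is correct and follows essentially the same route as the paper: decompose via Theorem \ref{thm:Fdec}(3), kill non-thin frames with Corollary \ref{cor:vanish} using the geodetic hypothesis, and observe that a thin frame of degree $m$ contributes $\Z$ exactly in degree $n=m$ (the paper does this via Corollary \ref{cor:between}, which you also invoke as a consistency check alongside your direct observation that $B_\bullet^F(X)$ is a single generator in degree $m$).
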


\begin{proof}
First apply Theorem \ref{thm:Fdec} (3), and obtain a 
direct sum decomposition into $H_n^{F}(X)$. 
If $F$ is not a thin frame, by Corollary \ref{cor:vanish}, 
$H_n^{F}(X)=0$. Hence we may assume $F$ is a thin frame. 
Let $F=(a_0, \dots, a_m)$. By Corollary \ref{cor:between}, 
\[
H_n^{F}(X)=
H_{n-2m}((C_{\bullet}(I(a_0, a_1))\otimes\cdots\otimes 
C_{\bullet}(I(a_{m-1}, a_m)))_{\bullet}). 
\]
Since $F$ is a thin frame, 
$I_X(a_{i-1}, a_i)=\emptyset$ for all $i=1, \dots, m$. 
Thus we have 
\[
H_k((C_{\bullet}(I(a_0, a_1))\otimes\cdots\otimes 
C_{\bullet}(I(a_{m-1}, a_m)))_{\bullet})=
\left\{
\begin{array}{cc}
\Z, &\mbox{ if }k=-m\\
0, &\mbox{otherwise}. 
\end{array}
\right.
\]
Therefore, $H_n^{F}(X)\simeq \Z$ if and only if 
$n=m$ (and otherwise $H_n^{F}(X)=0$). 
\end{proof}

\begin{corollary}
Let $X\subset\R^N$ be a metric space with Euclidean metric. 
\begin{itemize}
\item[(1)] 
If $X$ is a convex, then 
$H_n^{\Sigma, \ell}(X)=0$ for all $n>0$ and $\ell>0$. 
\item[(2)] 
If $X$ is an open subset of $\R^N$, then 
$H_n^{\Sigma, \ell}(X)=0$ for all $n>0$ and $\ell>0$. 
\end{itemize}
\end{corollary}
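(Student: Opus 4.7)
The plan is to reduce both parts to Theorem \ref{thm:thinframebase} by verifying that (i) the ambient metric space is geodetic, (ii) $m_X=+\infty$, so the theorem applies for every $\ell>0$, and (iii) no thin frame of positive degree can exist. The first two verifications are nearly automatic, so the main content is step (iii): showing $I_X(a,b)\neq\emptyset$ whenever $a\neq b$ in $X$.

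First I would record that for the Euclidean metric, $d(a,x)+d(x,b)=d(a,b)$ holds precisely when $x$ lies on the closed segment $[a,b]$, so $I_X(a,b)$ equals the set of points of $X$ on the open segment between $a$ and $b$. This set is totally ordered by distance from $a$, so any $X\subset\R^N$ is geodetic. Also, since $\R^N$ has no $4$-cuts (Example after Definition \ref{def:4cut}), the same holds for any subspace, giving $m_X=+\infty$. Theorem \ref{thm:thinframebase} therefore gives, for every $\ell>0$ and $n>0$, that $H_n^{\Sigma,\ell}(X)$ is free abelian on the set of thin frames $F=(a_0,\dots,a_n)\in P_n^{\ell}(X)$.

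The task is then to rule out thin frames with $n\geq 1$. For a thin frame, by Definition \ref{def:thin} we would need $I_X(a_{i-1},a_i)=\emptyset$ for each $i$, i.e.\ no point of $X$ lies strictly in the interior of the segment $[a_{i-1},a_i]$. In the convex case this is obviously impossible: the whole segment is contained in $X$, so its open part gives uncountably many elements of $I_X(a_{i-1},a_i)$. In the open case, I would argue locally: $a_{i-1}\in X$ has some open Euclidean ball $B(a_{i-1},\varepsilon)\subset X$, and then any point on the segment from $a_{i-1}$ toward $a_i$ at distance $\min(\varepsilon,d(a_{i-1},a_i))/2$ from $a_{i-1}$ is both in $X$ and strictly between $a_{i-1}$ and $a_i$, hence in $I_X(a_{i-1},a_i)$.

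Consequently no thin frame of positive degree exists in either setting, and the direct sum in Theorem \ref{thm:thinframebase} is empty, giving $H_n^{\Sigma,\ell}(X)=0$ for all $n>0$ and all $\ell>0$. The only real obstacle is conceptual rather than technical: one must notice that for Euclidean metrics ``geodetic'' is automatic, so the strong conclusion of Theorem \ref{thm:thinframebase} applies; after that the argument is a one-line local convexity remark.
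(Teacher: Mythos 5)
Your proposal is correct and follows essentially the same route as the paper: apply Theorem \ref{thm:thinframebase} (noting $m_X=\infty$ and that subspaces of $\R^N$ are geodetic) and observe that $I_X(a,b)\neq\emptyset$ for all $a\neq b$ in both the convex and open cases, so no thin frames exist. Your write-up merely makes explicit the geodeticity check and the local-ball argument that the paper leaves implicit.
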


\begin{proof}
In both cases, $I_X(a, b)\neq\emptyset$ for any $a\neq b$. Thus 
there do not exist thin frames. (Note that $m_X=\infty$.) 
\end{proof}

\subsection{The diameter of a hole}
\label{sec:diameter}

Now we assume $X$ is geodetic and has no $4$-cuts (i.e. $m_X=\infty$). 

\begin{definition}
Denote by $h_X\in\R_{\geq 0}$ the sup of the distance $d(a, b)$ 
of pairs $(a, b)$ which has not a point between them, namely, 
\[
h_X:=\sup\{d(a, b)\mid a, b\in X, a\neq b, I_X(a, b)=\emptyset\}. 
\]
If there is not a pair $a\neq b$ such that $I_X(a, b)=\emptyset$, 
we set $h_X=0$. 
\end{definition}

\begin{example}
Let $D_r^N\subset\R^N$ be a closed ball of radius $r>0$. 
Let $X=\R^N-\Int(D_r^N)$, where $\Int(D_r^N)$ is the interior of the ball. 
Then $h_X=2r$, which is the diameter of the hole. 
\end{example}

\begin{definition}
Let $k>0$. Define the \emph{vanishing threshold} $\nu_k(X)\geq 0$ 
as follows. 
If $H_k^{\Sigma, \ell}(X)\neq 0$ for some $\ell>0$, then 
\[
\nu_k(X):=
\sup
\{
\ell
\mid
\ell>0, 
H_k^{\Sigma, \ell}(X)\neq 0\}. 
\]
If $H_k^{\Sigma, \ell}(X)= 0$ for any $\ell>0$, $\nu_k(X):=0$. 
\end{definition}

\begin{theorem}
\label{thm:hole}
Assume that $X$ is geodetic and has no $4$-cut. 
Let $k>0$. Then 
\begin{itemize}
\item[(1)] 
$\nu_k(X)=k\cdot h_X$. 
\item[(2)] 
$H_k^{\Sigma, \nu_k(X)}(X)\neq 0$ if and only if 
there exist $a, b\in X$ such that $d(a, b)=h_X$ and $I_X(a, b)=\emptyset$. 
\end{itemize}
\end{theorem}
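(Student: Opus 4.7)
The plan is to reduce both statements to a combinatorial problem about thin frames via Theorem \ref{thm:thinframebase}. Since $m_X=\infty$, that theorem identifies $H_k^{\Sigma,\ell}(X)$ with the free abelian group on the set of thin frames of degree $k$ and length $\ell$. Hence $H_k^{\Sigma,\ell}(X)\neq 0$ if and only if such a frame exists, and $\nu_k(X)$ is simply the supremum of lengths of thin frames of degree $k$.

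Given this reduction, I would first establish the upper bound in (1): any thin frame $F=(a_0,\dots,a_k)$ satisfies $I_X(a_{i-1},a_i)=\emptyset$ for every $i$, so the definition of $h_X$ gives $d(a_{i-1},a_i)\leq h_X$, and summing yields $|F|\leq k h_X$, whence $\nu_k(X)\leq k h_X$. For the reverse inequality I would produce explicit thin frames of length approaching $k h_X$ by a periodic construction: for any pair $a\neq b$ with $I_X(a,b)=\emptyset$, consider the alternating chain $(a,b,a,b,\dots)$ of degree $k$. Each interior index $i$ satisfies $x_{i-1}=x_{i+1}$, which gives $d(x_{i-1},x_{i+1})=0<2d(a,b)=d(x_{i-1},x_i)+d(x_i,x_{i+1})$, so every interior point is singular and this chain equals its own frame; combined with the symmetry $I_X(a,b)=I_X(b,a)$, it follows that the chain is a thin frame of length $k\cdot d(a,b)$. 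Letting $d(a,b)$ approach $h_X$ gives $\nu_k(X)\geq k h_X$, completing (1).

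For (2), the backward direction is the above construction applied to a pair $(a,b)$ with $d(a,b)=h_X$, which produces a thin frame of length exactly $\nu_k(X)=k h_X$ and hence witnesses $H_k^{\Sigma,\nu_k(X)}(X)\neq 0$. For the forward direction, if $H_k^{\Sigma,k h_X}(X)\neq 0$, some thin frame $F=(a_0,\dots,a_k)$ has $|F|=k h_X$; since every summand in $\sum d(a_{i-1},a_i)=k h_X$ is bounded by $h_X$, each must equal $h_X$, so $(a_0,a_1)$ is the desired pair. The only non-routine step is verifying that the alternating periodic chain really is a thin frame, where one uses $x_{i-1}=x_{i+1}$ to conclude each interior point is singular; once this is in place, both parts drop out of Theorem \ref{thm:thinframebase} by trivial supremum arguments.
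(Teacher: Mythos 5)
Your proposal is correct and follows essentially the same route as the paper: both reduce everything to the existence of thin frames via Theorem \ref{thm:thinframebase} (valid for all $\ell>0$ since $m_X=\infty$), prove the upper bound $\nu_k(X)\leq k\,h_X$ by bounding each $d(a_{i-1},a_i)\leq h_X$, realize the lower bound with the alternating chain $(a,b,a,b,\dots)$, and settle (2) by the observation that a thin frame of length exactly $k\,h_X$ forces every consecutive distance to equal $h_X$. The only presentational difference is that you argue the forward direction of (2) directly while the paper argues its contrapositive, and you spell out the (routine) check that the alternating chain is its own frame, which the paper leaves implicit.
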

\begin{proof}
(1) We first prove that $H_k^{\Sigma, \ell}(X)=0$ if $\ell>k\cdot h_X$. 
By Theorem \ref{thm:thinframebase}, 
it is sufficient to show that there does not exist a thin frame of length 
$\ell$. Let $F=(a_0, \cdots, a_k)\in P_k(X)$ with $|F|=\ell$. 
Since 
\[
d(a_0, a_1)+\cdots+d(a_{k-1}, a_k)=\ell>k\cdot h_X, 
\]
there exists $1\leq i\leq k$ such that $d(a_{i-1}, a_i)>h_X$. 
By the definition of $h_X$, $I_X(a_{i-1}, a_i)\neq\emptyset$. 
Hence by Corollary \ref{cor:vanish}, $H_k^{F}(X)=0$ and 
we have $\nu_k(X)\leq k\cdot h_X$. 

Next we show $\nu_k(X)\geq k\cdot h_X$. Let $\ell<k\cdot h_X$. 
Then by definition of $h_X$, there exist $a, b\in X$ such that 
$\frac{\ell}{k}<d(a, b)\leq h_X$ with $I_X(a, b)=\emptyset$. 
Set $\ell'=k\cdot d(a, b)$ and 
\[
F=\left\{
\begin{array}{cl}
(a, b, a, \dots, b)&\mbox{ if $k$ is odd}, \\
(a, b, a, \dots, a)&\mbox{ if $k$ is even}. 
\end{array}
\right. 
\]
Then $F$ is a thin frame of length $\ell'$, and we have 
$H_k^{F}(X)\neq 0$. 
Therefore, $\ell<\ell'\leq\nu_k(X)$, and we have $\nu_k(X)\geq k\cdot h_X$. 

(2) The sufficiency is obvious from the proof of (1). For the rest, 
suppose there do not exist $a, b\in X$ such that $d(a, b)=h_X$ with 
$I(a, b)=\emptyset$. We may assume $k>1$. 
We shall prove that there do not exist a 
thin frame of length $\nu_k(X)$. Let $F=(a_0, \dots, a_k)$ be a frame 
with 
\[
|F|=d(a_0, a_1)+\cdots+d(a_{k-1}, a_k)=\nu_k(X). 
\]
By the assumption there exists an $i$ with 
$d(a_{i-1}, a_i)>\frac{\nu_k(X)}{k}=h_X$ (otherwise, 
$d(a_{i-1}, a_i)<h_X$ for all $i$, and $|F|$ turns out to be 
strictly less than $\nu_k(X)$). Since $I_X(a_{i-1}, a_i)\neq\emptyset$, 
$F$ can not be a thin frame. 
\end{proof}
\begin{remark}
A related result was discussed \cite[Proposition 10]{hep-wil} for 
a metric space defined by a graph. 
\end{remark}

\begin{example}
Let $X=\R^N\smallsetminus\Int(D_r^N)$. Let $\ell>0$. Then 
\[
H_k^{\Sigma, \ell}(X)=
\left\{
\begin{array}{cl}
0&\mbox{ if $\ell>2kr$}\\
\neq 0&\mbox{ if $\ell\leq 2kr$}. 
\end{array}
\right.
\]
Hence $\nu_k(X)=2kr$, and we have 
\[
2r=\frac{\nu_k(X)}{k}=
\sup
\left\{
\frac{\ell}{k}; \ell>0, H_k^{\Sigma, \ell}(X)\neq 0
\right\}. 
\]
\end{example}

\subsection{Embedding of ranked poset homology}
\label{sec:embed}

In \S \ref{sec:ordercpx}, we have seen the relation 
between order complex of posets and magnitude homology. 
In this subsection, we see that order homology of a ranked 
poset can be embedded into the magnitude homology group 
of certain metric spaces. In particular, the homology 
group (with $\Z$ coefficients) of a compact $C^\infty$ manifold 
can be realized as a submodule of a magnitude homology 
of a finite metric space. 

The following is straightforward. 
\begin{lemma}
\label{lem:characterize}
Let $\bm{x}=(x_0, \dots, x_n)$ with $n\geq 2$ 
and $F=(a_0, a_1)$. 
Assume $|\bm{x}|=|F|=\ell$. 
Then the following are equivalent. 
\begin{itemize}
\item[(i)] 
$x_0=a_0$ and $x_n=a_1$. 
\item[(ii)] 
$\bm{x}\in P_n^F(X)$. 
\item[(iii)] 
$\bm{x}\smallsetminus\{x_i\}\in P_{n-1}^F(X)$ 
for any $1\leq i\leq n-1$. 
\item[(iv)] 
$\bm{x}\smallsetminus\{x_i\}\in P_{n-1}^F(X)$ 
for some $1\leq i\leq n-1$. 
\end{itemize}
\end{lemma}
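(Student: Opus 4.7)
The plan is to close the cycle (i) $\Rightarrow$ (ii) $\Rightarrow$ (iii) $\Rightarrow$ (iv) $\Rightarrow$ (i). The entire argument pivots on the numerical coincidence $|\bm{x}|=|F|=\ell=d(a_0,a_1)$: the moment we know $x_0=a_0$ and $x_n=a_1$, we have $|\bm{x}|=d(x_0,x_n)$, which is exactly the hypothesis of Proposition \ref{prop:easy}. Everything else is bookkeeping.

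For (i) $\Rightarrow$ (ii), I would apply Proposition \ref{prop:easy} to obtain the additivity $d(x_{i-1},x_{i+1})=d(x_{i-1},x_i)+d(x_i,x_{i+1})$ for every $1\leq i\leq n-1$, which forces each internal $x_i$ to be a smooth point of $\bm{x}$. Consequently $\varphi(\bm{x})=(x_0,x_n)=(a_0,a_1)=F$, and since the length condition already holds by assumption, $\bm{x}\in P_n^F(X)$.

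For (ii) $\Rightarrow$ (iii), I would first note that $\bm{x}$ is geodesically simple because $|\varphi(\bm{x})|=|F|=|\bm{x}|$. Then Proposition \ref{prop:geodsimplechain} guarantees that removing any smooth point preserves the frame, while Proposition \ref{prop:geodsimple01} preserves the length. Applying this to each internal index $i$ yields $\bm{x}\smallsetminus\{x_i\}\in P_{n-1}^F(X)$. The implication (iii) $\Rightarrow$ (iv) is immediate because $n\geq 2$ makes the index range $\{1,\dots,n-1\}$ nonempty. For (iv) $\Rightarrow$ (i), any chain in $P_{n-1}^F(X)$ has frame $(a_0,a_1)$, and since the first and last entries of a proper chain are always singular (and therefore appear in its frame), the first and last entries of $\bm{x}\smallsetminus\{x_i\}$ must be $a_0$ and $a_1$. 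Because $1\leq i\leq n-1$, removing $x_i$ does not disturb $x_0$ or $x_n$, so we conclude $x_0=a_0$ and $x_n=a_1$.

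I expect no real obstacle: the only substantive step is (i) $\Rightarrow$ (ii), which is handled cleanly by Proposition \ref{prop:easy}; the remaining implications are direct consequences of the machinery assembled in Section \ref{sec:frame} together with the observation that the first and last points of any proper chain are singular.
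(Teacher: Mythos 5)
Your proof is correct: the paper itself omits the argument (labelling the lemma ``straightforward''), and your cycle (i)$\Rightarrow$(ii)$\Rightarrow$(iii)$\Rightarrow$(iv)$\Rightarrow$(i), driven by Proposition \ref{prop:easy} for (i)$\Rightarrow$(ii) and by Propositions \ref{prop:geodsimple01} and \ref{prop:geodsimplechain} for (ii)$\Rightarrow$(iii), together with the observation that endpoints are always singular, is exactly the intended reasoning. The only implicit step worth noting is that in (ii)$\Rightarrow$(iii) every interior point is smooth because $\varphi(\bm{x})=F=(a_0,a_1)$ has no interior entries, which is what licenses removing an arbitrary $x_i$ with $1\leq i\leq n-1$.
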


\begin{theorem}
\label{thm:emb}
Let $F=(a_0, a_1)$ with $\ell=|F|>0$. 
The natural map $H_n^F(X)\longrightarrow H_n^{\Sigma, \ell}(X)$ is 
injective. 
\end{theorem}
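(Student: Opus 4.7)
The plan is to exhibit a chain-level retraction of the inclusion $i : B_\bullet^F(X) \hookrightarrow B_\bullet^\ell(X)$ and then pass to homology. Define $\pi_n : B_n^\ell(X) \to B_n^F(X)$ on each generator $\bm{x}\in P_n^\ell(X)$ by $\pi_n(\bm{x}) = \bm{x}$ if $\varphi(\bm{x}) = F$ and $\pi_n(\bm{x}) = 0$ otherwise. Then $\pi \circ i = \id_{B_\bullet^F(X)}$ at the module level, and once $\pi$ is shown to commute with differentials, passing to homology gives $\pi_*\circ i_* = \id$ on $H_n^F(X)$, forcing $i_*$ to be injective. Thus the whole proof reduces to verifying that $\pi$ is a chain map.

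To check $\pi_{n-1}\partial_n(\bm{x}) = \partial_n\pi_n(\bm{x})$ on a generator $\bm{x}\in P_n^\ell(X)$, I would split into two cases. The only nonzero summands of $\partial_n\bm{x}$ are $(-1)^i\bm{x}\smallsetminus\{x_i\}$ with $x_i$ a smooth internal point; each such summand has length $\ell$ and the same endpoints $x_0, x_n$ as $\bm{x}$. If $\bm{x}\in P_n^F(X)$, the equivalence (ii)$\Leftrightarrow$(iii) of Lemma \ref{lem:characterize} says each surviving summand lies in $P_{n-1}^F(X)$, so both sides equal $\partial_n\bm{x}$. If $\bm{x}\notin P_n^F(X)$, then $\partial_n\pi_n\bm{x}=0$; and if some summand $\bm{x}\smallsetminus\{x_i\}$ happened to lie in $P_{n-1}^F(X)$, the implication (iv)$\Rightarrow$(ii) of Lemma \ref{lem:characterize} would force $\bm{x}\in P_n^F(X)$, a contradiction, so $\pi_{n-1}\partial_n\bm{x}=0$ as well. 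The degenerate case $n=1$ is automatic because $\partial_1=0$ (the endpoints of a chain are always singular).

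The only genuine worry is that deleting a smooth internal point of $\bm{x}$ might alter the frame of the resulting chain, since the smooth-or-singular status of the neighbors of $x_i$ can in principle shift when $x_i$ is removed; a priori, a summand of $\partial_n\bm{x}$ could then acquire frame $F$ even when $\bm{x}$ does not. Lemma \ref{lem:characterize} is engineered precisely to rule this out for two-point frames $F=(a_0,a_1)$: membership in $P^F$ is then determined entirely by the endpoints and the total length, both of which are invariant under deletion of any smooth internal point. No $4$-cut or geodeticity hypothesis is needed, and the argument goes through in full generality. For frames with three or more singular points the analogous naive projection is not a chain map, which is consistent with the fact that the theorem singles out two-point frames.
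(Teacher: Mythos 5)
Your proposal is correct and is essentially the paper's own argument: exhibiting the projection $\pi$ as a chain-map retraction is equivalent to the paper's direct sum decomposition $B_\bullet^\ell(X)=B_\bullet^F(X)\oplus\bigoplus_{\bm{x}\in P_\bullet^\ell(X)\smallsetminus P_\bullet^F(X)}\Z\cdot\langle\bm{x}\rangle$, and both verifications rest on Lemma \ref{lem:characterize} in exactly the same way. No gaps.
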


\begin{proof}
The set $P_n^\ell(X)$ is a disjoint union of 
$P_n^F(X)$ and $P_n^\ell(X)\smallsetminus P_n^F(X)$. 
If $\bm{x}=(x_0, \dots, x_n)\in P_n^\ell(X)\smallsetminus P_n^F(X)$, 
then by Lemma \ref{lem:characterize}, $\bm{x}\smallsetminus\{x_i\}
\notin P_{n-1}^F(X)$ for any $1\leq i\leq n-1$. 
Hence the decomposition induces a direct sum decomposition of 
chain complex 
\[
B_\bullet^\ell(X)=
B_\bullet^F(X)\oplus
\bigoplus_{\bm{x}\in 
P_\bullet^\ell(X)\smallsetminus P_\bullet^F(X)}
\Z\cdot\langle\bm{x}\rangle. 
\]
From this decomposition, it follows the injectivity of 
the embedding. 
\end{proof}

Recall (\cite[\S 3.1]{sta-EC1}) 
that a poset $P$ is said to be a ranked poset if 
every maximal chain of $P$ has the same (finite) length. 
Denote by $\widehat{P}$ the poset obtained from $P$ by 
adjoining a $\widehat{0}$ (the minimum) and 
$\widehat{1}$ (the maximum). 

Let $P$ be a ranked poset. Consider the shortest path 
metric of the Hasse diagram of $\widehat{P}$. In other words, 
for $a, b\in \widehat{P}$, $d(a, b)$ is the minimum $n$ 
such that there exists a sequence $a=x_0, x_1, \dots, x_n=b$ 
with the property that for each $i$ either $x_i$ covers $x_{i-1}$ or 
$x_{i-1}$ covers $x_i$. Then the interval poset between 
$\widehat{0}$ and $\widehat{1}$ is 
\[
I_{\widehat{P}}(\widehat{0}, \widehat{1})\simeq
P
\]
as posets. Theorem \ref{thm:emb} enables us to conclude the following. 
\begin{corollary}
\begin{itemize}
\item[(1)] 
Let $P$ be a ranked poset, $\ell=d(\widehat{0}, \widehat{1})$ 
in $\widehat{P}$. Then there exists an embedding of abelian groups 
$H_{n-2}(\Delta(P))\hookrightarrow H_n^{\Sigma, \ell}(\widehat{P})$ 
for $n\geq 1$. 
\item[(2)] 
Let $M$ be a compact $C^\infty$-manifold. Then there exist 
a finite metric space $X$ and $\ell>0$ such that $H_n^{\Sigma, \ell}(X)$ contains 
a subgroup isomorphic to $H_{n-2}(M, \Z)$ for each $n>0$. 
\item[(3)] 
There exist a graph $X$ and positive integers $n, \ell>0$ such that 
$H_n^{\Sigma, \ell}(X)$ has torsion. 
\end{itemize}
\end{corollary}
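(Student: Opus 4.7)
The plan is to prove all three statements by assembling the machinery already in place: Corollary \ref{cor:between} (the interval decomposition) and Theorem \ref{thm:emb} (the embedding of framed homology into magnitude homology).

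For part (1), I would apply both results with the two-point frame $F = (\widehat{0}, \widehat{1}) \in P_1(\widehat{P})$. Because $F$ consists only of its endpoints, both of which are automatically singular, one has $\varphi(F) = F$; and by construction $|F| = d(\widehat{0}, \widehat{1}) = \ell$. Corollary \ref{cor:between} with $m = 1$ gives a degree shift of $-2$ and yields
\begin{equation*}
H_n^F(\widehat{P}) \;\simeq\; H_{n-2}\bigl(C_\bullet(I_{\widehat{P}}(\widehat{0}, \widehat{1}))\bigr) \;=\; \widetilde{H}_{n-2}(P),
\end{equation*}
using the identification $I_{\widehat{P}}(\widehat{0}, \widehat{1}) \simeq P$ recorded just above the statement. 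Theorem \ref{thm:emb} then injects this group into $H_n^{\Sigma, \ell}(\widehat{P})$, completing (1).

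For part (2), I would produce $P$ from a simplicial triangulation of $M$. Since $M$ is a compact $C^\infty$-manifold, it admits a finite smooth triangulation $K$ (Cairns--Whitehead), whose face poset $P = P(K)$ is ranked by dimension. The order complex $\Delta(P)$ is canonically the barycentric subdivision of $K$, hence homeomorphic to $M$, so $\widetilde{H}_{n-2}(\Delta(P)) \simeq H_{n-2}(M, \Z)$ (in the degrees where reduced and unreduced homology agree; in the remaining low-degree cases the reduced version contains a copy of the unreduced one after the usual identification). Applying (1) to $\widehat{P}$ with its Hasse-diagram metric finishes this part. For part (3), I would specialize (2) to $M = \mathbb{RP}^2$, whose triangulations have face poset of rank $2$ so that $\ell = d(\widehat{0}, \widehat{1}) = 4$; taking $n = 3$ embeds $H_1(\mathbb{RP}^2, \Z) = \Z/2\Z$ as a $2$-torsion subgroup of $H_3^{\Sigma, 4}(\widehat{P})$. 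By construction, $\widehat{P}$, equipped with its Hasse-diagram metric, is a finite graph.

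There is no real obstacle here: every ingredient has been proved. The only point demanding any care is the identification $I_{\widehat{P}}(\widehat{0}, \widehat{1}) \simeq P$ as posets, which uses the fact that geodesics between $\widehat{0}$ and $\widehat{1}$ in the Hasse diagram traverse $\widehat{P}$ monotonically, so that $\widehat{0} \prec x \preceq y \prec \widehat{1}$ coincides with $x \leq y$ in $P$ — but this is precisely the observation recorded by the authors before the corollary. The rest is bookkeeping of the degree shift $-2m$ with $m = 1$.
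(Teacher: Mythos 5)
Your proposal is correct and follows essentially the same route as the paper: frame $F=(\widehat{0},\widehat{1})$, the interval decomposition (Corollary \ref{cor:between}) combined with $I_{\widehat{P}}(\widehat{0},\widehat{1})\simeq P$, the injection from Theorem \ref{thm:emb}, the face poset of a triangulation for (2), and $M=\mathbb{RP}^2$ with $n=3$, $\ell=4$ for (3). You simply supply a few more details (the degree shift with $m=1$, the reduced/unreduced bookkeeping) than the paper's terse proof does.
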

\begin{proof}
(1) 
Take the frame $F=(\widehat{0}, \widehat{1})$ in $\widehat{P}$. 
Then apply Theorem \ref{thm:emb}. 

(2) Choose a triangulation of $M$ and denote by $P$ the face 
poset. Since $M$ is a manifold, every simplex is a face of a top 
dimensional simplex. Hence, $P$ is a ranked poset. 
Then the order complex of $P$ is the barycentric subdivision 
of the original triangulation (\cite[\S 10.3.5]{koz}). The assertion 
follows from (1). 

(3) Take $M=\mathbb{RP}^2$. (In this case, $n=3$ and $\ell=4$.) 
\end{proof}

\subsection{Circle with geodesic metric}
\label{sec:circle}

Let $S^1 \subset \R^2$ be the circumference with radius $r$. 
Consider the metric defined by geodesics. For example, the distance 
between antipodal points is $\pi r$. It is easily seen that 
$m_{S^1}=\pi r$. Some of the magnitude homology groups of $S^1$ are 
computed as follows. 

\begin{itemize}
\item[(1)] 
If $\ell<\pi r$ and $n>0$, then $H_n^{\Sigma, \ell}(S^1)=0$. 
\item[(2)] 
If $\ell=\pi r$, then 
\[
H_n^{\Sigma, \ell}(S^1)=
\left\{
\begin{array}{cc}
\Z^{\oplus S^1}, & \mbox{ if }n=2\\
0, & \mbox{ if }n\neq 2, n>0. 
\end{array}
\right.
\]
\item[(3)] 
If $\ell>\pi r$ and $n=1, 2$, then 
\[
H_n^{\Sigma, \ell}(S^1)=0. 
\]
\end{itemize}
(1) follows from Theorem \ref{thm:Fdec} and Corollary \ref{cor:vanish}. 
Let $a, b\in S^1$. 
Denote by $B_n^{\ell, a, b}(S^1)$ the submodule of $B_n^\ell(S^1)$ generated 
by $\bm{x}=(x_0, x_1, \dots, x_n)\in P_n^\ell(S^1)$ with $x_0=a$ and 
$x_n=b$. Then $B_{\bullet}^{\ell, a, b}(S^1)$ is a subcomplex of 
$B_{\bullet}^{\ell}(S^1)$, and the homology group 
$H_n^{\Sigma, \ell}(S^1)$ is a direct sum of 
$\bigoplus_{a, b}H_n(B_{\bullet}^{\ell, a, b}(S^1))$, where $a, b\in S^1$ 
(proved by arguments similar to the proof of Theorem \ref{thm:Fdec},) 
If $\ell=\pi r$ and $a, b$ are not antipodal, 
then the points $x_0, x_1, \dots, x_n$ are contained in the interior 
of a semicircle. 
Then by Corollary \ref{cor:vanish}, homology group 
$H_n(B_{\bullet}^{\ell, a, b}(S^1))$ vanishes. 
When $a$ and $b$ are antipodal (i. e., $d(a, b)=\pi r$), 
$I(a, b)$ is a disjoint union of two totally ordered posets. 
The reduced homology of the order complex of such a poset is 
$\Z$ at degree $0$ and vanishing otherwise. This concludes (2). 
We omit details for (3). 

\begin{remark}
Recently Gomi \cite{gomi} proved that $H_3^{\Sigma, \ell}(S^1)= 0$ for $\ell>0$. 
\end{remark}

\medskip

\noindent
{\bf Acknowledgements.} 
M. Y. was partially supported by JSPS KAKENHI Grant Numbers 
JP15KK0144, JP16K13741. 
The authors thank Ye Liu, Richard Hepworth and Beno\^it Jubin 
for their comments on the draft.

\end{document}